\newtheorem{theorem}{Theorem}[section]
\newtheorem{proposition}[theorem]{Proposition}
\newtheorem{corollary}[theorem]{Corollary}
\theoremstyle{definition}
\newtheorem{definition}[theorem]{Definition}
\newtheorem{example}[theorem]{Example}
\newtheorem{remark}[theorem]{Remark}
\theoremstyle{remark}
\newcommand{\brref}[1]{(\ref{#1})}
\newcommand{\bslocus}[1]{{\rm Bs} |#1|}
\newcommand{\bslocusnobars}[1]{{\rm Bs}\, #1 }
\newcommand{\calo}{{\mathcal O}}
\newcommand{\milano}{Dipartimento di Matematica ``F. Enriques"
	\\ Universit\`a degli Studi di Milano \\ Via Saldini 50 \\ 20133
	Milano, Italy}
\newcommand\zed{{\mathbb Z}}
\newcommand\nat{{\mathbb N}}
\newcommand\rat{{\mathbb Q}}
\newcommand\real{{\mathbb R}}
\newcommand\comp{{\mathbb C}}
\newcommand{\Pin}[1]{{\mathbb P}^{#1}}
\newcommand{\restrict}[2]{{#1}_{\mid _{#2}}}
\newcommand{\lra}{\longrightarrow}
\newcommand{\ra}{\rightarrow}
\newcommand{\Proj}[1]{\mathbb{P}(#1)}
\newcommand{\Projcal}[1]{\mathbb{P}({\mathcal #1})}
\newcommand{\oof}[2]{{\mathcal O}_{#1}({#2})}
\newcommand{\oofp}[2]{{\mathcal O}_{\mathbb{ P}^{#1}}({#2})}
\newcommand{\oofo}[1]{{\mathcal O}_{#1}}
\newcommand{\iof}[2]{{\mathcal I}_{#1}({#2})}
\newcommand{\iofo}[1]{{\mathcal I}_{#1}}
\newcommand{\ideal}{\mathcal{I}}
\newcommand{\scroll}[1]{(\Proj{#1},\taut{#1})}
\newcommand{\scrollcal}[1]{(\Projcal{#1},\tautcal{#1})}
\newcommand{\xel} {(X, L)}
\newcommand{\supp}[1]{{\rm Supp}(#1)}
\newcommand{\taut}[1]{{\mathcal O}_{\mathbb{P}(#1)}(1)}
\newcommand{\tautof}[2]{{\mathcal O}_{\mathbb{P}(#1)}(#2)}
\newcommand{\tautcal}[1]{{\mathcal O}_{\mathbb{P}({\mathcal#1})}(1)}
\newcommand{\tautcalof}[2]{{\mathcal O}_{\mathbb{P}({\mathcal#1})}(#2)}
\newcommand{\num}{\equiv}
\newcommand{\elbar}{\overline{\ell}}
\newcommand{\rk}[1]{{\rm rk}\,(#1)}
\newcommand{\lineq}{\sim}
\newcommand{\tauttilde}{\taut{\tilde{E}}}
\newcommand{\matlab}{$\rm{MATLAB}^{\circledR}$}
\newcommand{\maple}{$\rm{Maple}^{\circledR}$}
\newcommand{\macaulay}{$\rm{Macaulay2}^{\copyright}$}
\newcommand{\Ppp}{P_{(\varphi,\psi)}}
\newcommand{\bX}{\{\mathbf{X}\}}
\newcommand{\bxi}{\{\mathbf{X}_i\}}
\newcommand{\bxj}{\{\mathbf{X}_j\}}
\newcommand{\nbXi}{\mathbf{X}_i}
\newcommand{\nbxj}{\mathbf{X}_j}
\newcommand{\critloc}[1]{\mathfrak{C}^#1_{(P,Q)}}
\newcommand{\ucritloc}[1]{\mathcal{U}^#1}
\newcommand{\critlocn}[1]{\mathfrak{C}^#1_{(P_i,Q_i)}}
\newcommand{\zeros}[2]{\mathbf{0}(#1,#2)}
\newcommand{\arow}[2]{\mathbf{#1}^{#2}}
\newcommand{\prow}[2]{\mathbf{p}^#2_#1}
\newcommand{\pxrow}[2]{\mathbf{px}^#2_#1}
\newcommand{\prowx}[2]{\mathbf{p}^#2_#1 \mathbf{X}}
\newcommand{\prowxsub}[3]{\mathbf{p}^#3_{#1_{#2}} \mathbf{X}}
\newcommand{\qrow}[2]{\mathbf{q}^#2_#1}
\newcommand{\qrowx}[2]{\mathbf{q}^#2_#1 \cdot \mathbf{X}}
\newcommand{\q}{\mathbf{q}}
\newcommand{\jvariation}{(j_1, \dots j_n)\in \mathcal{A}^{\times n}}
\newcommand{\Grande}{G^{a_1,\dots,\hat{a_h},\dots,a_n}_{h,s}}
\begin{document}
	
	\date{\today}
	\title[Birational geometry of critical loci]{Birational geometry of critical loci in Algebraic Vision}
	\author[M.Bertolini]{Marina Bertolini$^\ast$} \thanks{$^\ast$ corresponding author}
	\email{marina.bertolini@unimi.it} \thanks{The authors are members
		of GNSAGA of INdAM}
	
	\author[R.Notari]{Roberto Notari}
	\email{roberto.notari@polimi.it}
	
	\author[C.Turrini]{Cristina Turrini}
	\email{cristina.turrini@unimi.it}
	
	\address{\milano}
	\address{Dipartimento di Matematica, Politecnico di Milano, Piazza Leonardo da Vinci 32, 20133 Milano}

\begin{abstract}
In Algebraic Vision, the projective reconstruction of the position of each camera and scene point from the knowledge of many enough corresponding points in the views is called the structure from motion problem. It is known that the reconstruction is ambiguous if the scene points are contained in particular algebraic varieties, called critical loci. To be more precise, from the definition of criticality, for the same reconstruction problem, two critical loci arise in a natural way. In the present paper, we investigate the relations between these two critical loci, and we prove that, under some mild smoothness hypotheses, (some of) their irreducible components are birational. To this end, we introduce a unified critical locus that restores the symmetry between the two critical loci, and a natural commutative diagram relating the unified critical locus and the two single critical loci. For technical reasons, but of interest in its own, we also consider how a critical locus change when one increases the number of views.
\end{abstract}
\maketitle
%

\section{Introduction}

Algebraic Vision is the branch of Computer Vision and Multiview Geometry that addresses the problems with techniques from algebraic geometry. In this context, a classic problem is the {\it structure from motion}, i.e. the projective reconstruction of static and dynamic scenes from photos or videos. More precisely, this problem involves determining the unknown positions both of cameras and of scene points in the ambient space, given a sufficient number of images. Historically, the problem of structure from motion originated with the reconstruction of static scenes in $\mathbb{P}^{3}$ from images in $\mathbb{P}^{2}$ (\cite{Hart-Zi2})  and was later generalized to the reconstruction of dynamic scenes in $\mathbb{P}^{3}$ modelled as static scenes in ambient spaces of any dimension (\cite{WS,survey}).

In this setting, cameras are identified with projection matrices between projective spaces of any dimension.

Naively, one might guess that the reconstruction problem can be successfully solved if a sufficient number of $n$--tuples of corresponding points, that is to say, points which are projections of the same point of the scene, are identified in different views. Indeed, even if this happens when cameras and scene points are in sufficiently general position, this is not always the case. E.g., even in the classical setup of two projections from three-dimensional space, for particular configurations of points $ \mathbf{X}_i$ in the scene and cameras ${Q}_j$ there exist another - non-projectively equivalent - sets of scene points $ \mathbf{Y}_i$ and cameras ${P}_j$ producing the same images in the view planes, so leading to ambiguous reconstruction. The locus that these scene points describe is called {\it critical} with respect to the corresponding set of cameras.

Critical loci turn out to be determinantal varieties in the projective spaces. They have been extensively studied, both in the classical case of any number of projections from $\mathbb{P}^{3}$ to $\mathbb{P}^{2}$ (\cite{Hart-Ka}), and in the case of $n$ projections from $\mathbb{P}^{k}$ to $\mathbb{P}^{h_i}, i= 1, \dots, n$ (\cite{bbnt,bnt1,bnt2,tubbLAIA,bnt3,survey}).

From the definition of the critical locus (see Definition \ref{critconfm}), it naturally emerges that critical points actually occur in pairs $(\mathbf{X}_i,\mathbf{Y}_i)$, with $ \mathbf{X}_i \in \mathcal{X}$ and $ \mathbf{Y}_i \in \mathcal{Y}$. The purpose of this work is to determine, from the perspective of birational geometry, the relationship between the critical loci $\mathcal{X}$ and $\mathcal{Y}$. To this purpose, we introduce the notion of {\it unified critical locus} $\mathcal{U} $ whose elements are the previous couples of critical points $ (\mathbf{X}_i,\mathbf{Y}_i)$.

For a set of $n$ projections from  $\mathbb{P}^{k}$ to $\mathbb{P}^{h_i}$, $i=1, \dots, n$, the unified critical locus is an algebraic variety contained in $\mathbb{P}^{k} \times \mathbb{P}^{k}$ for which we get the defining ideal in Section \ref{sec::crit-loc}. Hence, we have a natural commutative diagram linking the unified critical locus and the two critical loci $\mathcal{X},\mathcal{Y}$ (diagram \ref{main-cd}) which is extensively studied in Section \ref{maps-sect}. The main result is Theorem \ref{birational}, where we prove that there are irreducible components inside each critical locus, that are birational under some reasonable smoothness assumptions. On the other side, Section \ref{sec::examples} contains some explicit examples showing that the hypotheses in Theorem \ref{birational} cannot be relaxed.

A fundamental tool for obtaining the results in Section \ref{maps-sect}, and of interest in its own right, is a preliminary investigation into another aspect of the construction of critical loci. Specifically, we investigate how critical loci change when we increase the number of projections of the same scene. Section \ref{sec::nestingcritical loci} is dedicated to this analysis. A seminal case of nested critical loci is considered in \cite{Hart-Ka}, in the case of two and three projections from $\mathbb{P}^{3}$ to $\mathbb{P}^{2}$.

Finally, in the first part of Section \ref{sec::crit-loc}, we recall the standard set--up in multiview geometry, and the main definitions and results on critical loci; in the second part of the same section, we define the unified critical locus, describe its defining ideal and the commutative diagram that relates $ \mathcal{U}, \mathcal{X} $ and $ \mathcal{Y}$.


\section{Multiview geometry and critical loci}
\label{sec::crit-loc}
In the first subsection, we recall the standard set--up used when studying Computer Vision or better Multiview Geometry problems with techniques from algebraic geometry. Since the literature on the subject is becoming reacher and reacher, nowadays it goes under the name of Algebraic Vision. Moreover, we recall the definition of critical locus, the construction of its defining ideal, and the expected dimension and degree. In the second part, we define the unified critical locus, explain the construction of its defining ideal, and the natural commutative diagram relating the unified critical locus and the single critical loci.

\subsection{Notation and critical locus definition}

In Algebraic Vision, a {\it camera} $P$ is a linear projection from
$\mathbb{P}^{k}$ onto $\mathbb{P}^{h},$  from a linear subspace $ C $ of
dimension $ k-h-1$,  called {\it center of projection}. The target
space $\mathbb{P}^{h}$ is called {\it view}. A {\it scene} is a set of
points $ \mathbf{X}_i \in \mathbb{P}^{k}, i=1, \dots, N$.

Using homogeneous coordinates in $\mathbb{P}^{k}$ and $\mathbb{P}^{h},$ we
identify $P$ with a $(h+1) \times (k+1) $ matrix of maximal rank,
defined up to a multiplicative constant.  Hence, $ C $ comes out to
be the right annihilator of $P$.

Let us consider a set of $ n $ cameras $P_j:\mathbb{P}^{k}\setminus C_{j} \to
\mathbb{P}^{h_j}$, $j=1, \dots, n$, projecting the same scene in $\mathbb{P}^k$. The images $P_j(\mathbf{X}) \in \mathbb{P}^{h_j} $ of the same scene point $\mathbf{X} \in \mathbb{P}^{k}$ in the different views are said to be \textit{corresponding points}.

As we said in the Introduction, a classical problem in this context is the so--called
{\it{projective reconstruction}} of a scene: determine $ n $ cameras $P_j:\mathbb{P}^{k}\setminus C_{j} \to
\mathbb{P}^{h_j}$, $ j=1, \dots, n$ (i.e., the projection matrices), and a scene in
$\mathbb{P}^k$ (i.e., the coordinates of the scene points), starting from many enough corresponding points in the views, up to projective transformations. It is known that the reconstruction is ambiguous when all scene points lie on the critical locus.

In order to describe systematically critical loci, it is appropriate to give formal definitions.
\begin{definition} \label{critconfm}
Given two sets of $ n $ projections $ Q_j, P_j: \mathbb{P}^k \dashrightarrow \mathbb{P}^{h_j}$,$j=1, \dots, n$,
two sets of points $\{ \mathbf{X}_1, \dots, \mathbf{X}_N\}$ and $\{ \mathbf{Y}_1, \dots, \mathbf{Y}_N\}$, $ N \gg 0$,
in $\mathbb{P}^{k}$, are said to be \textit{conjugate critical
configurations}, associated to the projections
$\{Q_1, \dots, Q_n\}$ and $\{P_1, \dots, P_n\}$ if, for all $i = 1, \dots, N$ and $j = 1,
\dots, n$, we have $Q_j(\mathbf{X}_i) = P_j(\mathbf{Y}_i)$.
\end{definition}

Notice that in Definition \ref{critconfm}, the symmetry between the projections $ Q_1, \dots, Q_n $ and $ P_1, \dots, P_n $ is perfect.

Now, we can define the critical loci.
\begin{definition} \label{critical-loci}
Given two sets of $ n $ projections $ Q_j, P_j : \mathbb{P}^k \dashrightarrow \mathbb{P}^{h_j}$, $j=1, \dots, n$, as above, the locus $ \mathcal{X} \subseteq \mathbb{P}^k $ ($ \mathcal{Y} \subseteq \mathbb{P}^k$, respectively) containing all possible critical configurations $ \{ \mathbf{X}_1, \dots, \mathbf{X}_N \} $ ($ \{ \mathbf{Y}_1, \dots, \mathbf{Y}_N \}$, respectively) is called \textit{critical locus} for the associated projections.
\end{definition}

The current definition is slightly different from the analogous one in literature, e.g. see \cite{bnt1,bnt2,Hart-Ka}. We modify it in order to be closer to the standard practice when computing critical loci. The difference is that here we first choose the two $n$--tuples of projections, and we consider two scenes that are made of critical points, while, in the old one, one has to consider a $n$--tuple of projections and its scene, and in a second stage, one has to find a second $n$--tuple of projections and a second scene such that the same condition of the definition apply.

As previously said, if the scene points do not belong to a critical locus, the reconstruction problem has a unique solution, or equivalently, is not ambiguous.

We recall how to get explicitly the defining ideal of the critical loci for a pair of $n$--tuples of conjugate projections $ \{Q_1, \dots,Q_n\}$ and  $ \{P_1, \dots,P_n\}$. To easier distinguish between $ \mathcal{X} $ and $ \mathcal{Y}$, we use different names for the coordinates in $ \mathbb{P}^k$: when computing $ \mathcal{X}$, we use $ x_0, \dots, x_k$, while we use $ y_0, \dots, y_k$ when we compute $ \mathcal{Y}$.

In \cite{Hart-Schaf} the authors define a family of tensors, the so-called {\it{Grassmann tensors}}, which encode the constraints existing among corresponding points in the different images. In \cite{bnt2}, making use of these tensors, the authors show that the critical locus $ \mathcal{X} \subseteq \mathbb{P}^k $ is a determinantal variety and that its ideal $ I(\mathcal{X}) $ can be obtained as follows.

Let $\mathbf{X}$ be a point in $\mathbb{P}^k$ and consider the matrix \begin{equation} \label{matrix-MX}  M_{\mathcal{X}} = \left( \begin{array}{ccccccc} P_1 &
Q_1(\mathbf{X}) & 0 & 0 & \dots & 0 & 0 \\ P_2 & 0 &
Q_2(\mathbf{X}) & 0 & \dots & 0 & 0 \\ \vdots & & & & & & \vdots
\\ P_n & 0 & 0 & 0 & \dots & 0 & Q_n(\mathbf{X}) \end{array}
\right).\end{equation}
$ M_{\mathcal{X}}$ is a $ (n+ \displaystyle \sum_{i=1}^n h_i) \times (n+k+1) $ matrix,
the last $n$ columns of which are of linear forms, while the first
$ k+1 $ columns are of constants.

\begin{proposition} \label{prop-2-1} (\cite{bnt2})
	The ideal $ I(\mathcal{X})$ of the critical locus $ \mathcal{X} $
	is generated by the maximal minors of $ M_{\mathcal{X}}$, and so $\mathcal{X}$ is a determinantal variety.
	Moreover, $ \mathcal{X} $ contains the center of $ Q_j$, $j=1,\dots, n$.
\end{proposition}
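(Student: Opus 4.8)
The statement has two parts: (i) $I(\mathcal{X})$ is generated by the maximal minors of $M_{\mathcal{X}}$, and (ii) $\mathcal{X}$ contains the centers $C_j$ of the $Q_j$. I will take part (i) as the substantive claim and part (ii) as a short consequence of the structure of $M_{\mathcal{X}}$.

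First I would unwind the defining property. A point $\mathbf{X}\in\mathbb{P}^k$ lies in $\mathcal{X}$ precisely when there is a conjugate point $\mathbf{Y}\in\mathbb{P}^k$ (and suitable homogeneous representatives) with $Q_j(\mathbf{X})=P_j(\mathbf{Y})$ in $\mathbb{P}^{h_j}$ for every $j$; passing to affine representatives, this means there exist scalars $\lambda_j$, not all forced to vanish, with $P_j\mathbf{Y}=\lambda_j Q_j(\mathbf{X})$ for $j=1,\dots,n$, together with $\mathbf{Y}\neq 0$. The plan is to read this as a linear system in the unknown vector $(\mathbf{Y},\lambda_1,\dots,\lambda_n)\in\mathbb{C}^{k+1}\times\mathbb{C}^n$: the equations $P_j\mathbf{Y}-\lambda_j Q_j(\mathbf{X})=0$ are exactly the rows of $M_{\mathcal{X}}\cdot(\mathbf{Y},-\lambda_1,\dots,-\lambda_n)^{\mathsf{T}}=0$. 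Hence $\mathbf{X}\in\mathcal{X}$ iff this homogeneous system has a nontrivial solution, i.e. iff $\operatorname{rank} M_{\mathcal{X}}(\mathbf{X}) < n+k+1$ (the number of columns), which is the vanishing of all maximal minors. The one subtlety is to argue that the solution can be taken with $\mathbf{Y}\neq 0$: if every nontrivial solution had $\mathbf{Y}=0$, then some $\lambda_j\neq 0$ with $\lambda_j Q_j(\mathbf{X})=0$, forcing $Q_j(\mathbf{X})=0$, i.e. $\mathbf{X}\in C_j\subseteq\mathcal{X}$ anyway — so on the locus of genuine interest the two conditions agree, and the minors cut out exactly the (closure of the) critical locus. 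I would phrase this carefully, invoking the fact that $\mathcal{X}$ is by definition the Zariski closure of the set of points admitting a conjugate, and that these minors define a closed set; I would also recall from \cite{bnt2} that for $N\gg 0$ the conjugate projections $P_j$ are themselves determined by $\mathbf{X}$ ranging over enough critical points, so that the matrix $M_{\mathcal{X}}$ with fixed $P_j$ is the correct object.

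For part (ii): if $\mathbf{X}\in C_j$ then $Q_j(\mathbf{X})=0$, so the $j$-th block-row of $M_{\mathcal{X}}(\mathbf{X})$ reads $(P_j\mid 0\mid\cdots\mid 0)$, with the linear-form column identically zero. Since $M_{\mathcal{X}}$ has $n+k+1$ columns but this row-block contributes nonzero entries only in the first $k+1$, the column indexed by $Q_j(\mathbf{X})$ is the zero column; any maximal minor either omits that column or includes it and hence vanishes, but more cleanly: deleting that zero column leaves an $(n+\sum h_i)\times(n+k)$ matrix, so every $(n+k+1)$-minor of $M_{\mathcal{X}}(\mathbf{X})$ vanishes. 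Therefore $C_j\subseteq\mathcal{X}$.

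The main obstacle is not computational but conceptual: pinning down the precise relationship between the set-theoretic object (points with a conjugate) and the scheme-theoretic object (zero locus of the minor ideal), and making sure the passage to homogeneous representatives and the genericity hypothesis $N\gg 0$ are handled so that no spurious components are introduced or lost. The linear-algebra core — that a nontrivial kernel of a matrix with more columns than the rank amounts to the vanishing of maximal minors — is standard; the care lies in the "$\mathbf{Y}\neq 0$'' caveat and in citing \cite{bnt2} for the determinantal structure and for why this particular $M_{\mathcal{X}}$ (rather than some enlargement incorporating all $N$ points) already captures $I(\mathcal{X})$.
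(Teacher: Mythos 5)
The paper does not prove this proposition: it is imported from \cite{bnt2}, where it is obtained via the Grassmann tensors of \cite{Hart-Schaf}, so there is no in-paper argument to compare yours against and I can only judge the sketch on its own terms. Two pieces of it are sound. Part (ii) is correct and complete: for $\mathbf{X}\in C_j$ the column of $M_{\mathcal{X}}(\mathbf{X})$ containing $Q_j(\mathbf{X})$ is identically zero, and since the maximal minors have order $n+k+1$, equal to the number of columns, every one of them uses that column and vanishes. The forward set-theoretic implication is also fine: if $\mathbf{X}$ admits a conjugate $\mathbf{Y}$, then $(\,Y,-\lambda_1,\dots,-\lambda_n)^{T}$ is a nontrivial kernel vector of $M_{\mathcal{X}}(\mathbf{X})$ and all maximal minors vanish.

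The gaps are in the remaining two thirds of the statement. First, the converse of the rank argument genuinely fails at the centers: a nontrivial kernel vector may have $Y=0$, or may have some $\lambda_j=0$ (so that the would-be $\mathbf{Y}$ lies in the center of $P_j$ and $P_j(\mathbf{Y})$ is undefined), and then $\mathbf{X}$ need not admit a conjugate. This is not a removable technicality that can be absorbed into ``the closure of the locus of genuine interest'': the paper's Theorem \ref{fibra} and Example \ref{example-3-1} exhibit a critical locus in which an entire center ($L_{Q_3}$ there) is an irreducible component of the determinantal locus whose general point lies outside $\pi_1(\mathcal{U})$, i.e.\ admits no conjugate, so the determinantal locus is strictly larger than the closure of the conjugate-admitting set. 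The proposition therefore only makes sense if one takes the determinantal (Grassmann-tensor) description as the operative definition of $\mathcal{X}$, which is what the paper tacitly does; your sketch asserts an equivalence that is false under the literal reading of Definition \ref{critconfm}. Second, the statement concerns the ideal $I(\mathcal{X})$ being \emph{generated} by the maximal minors, not merely their common zero set; a pointwise rank computation cannot yield this, since one must also know that the minor ideal is saturated and radical. You defer exactly this to \cite{bnt2}, but that deferral is the entire ideal-theoretic content of the proposition, so what remains proved is the easy set-theoretic inclusion together with part (ii).
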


In a similar way, to get the critical locus $ \mathcal{Y} \subseteq \mathbb{P}^k$, that has the same properties of $ \mathcal{X}$, one has to use the matrix

\begin{equation} \label{matrix-MY}  M_{\mathcal{Y}} = \left( \begin{array}{ccccccc} Q_1 &
P_1(\mathbf{Y}) & 0 & 0 & \dots & 0 & 0 \\ Q_2 & 0 &
P_2(\mathbf{Y}) & 0 & \dots & 0 & 0 \\ \vdots & & & & & & \vdots
\\ Q_n & 0 & 0 & 0 & \dots & 0 & P_n(\mathbf{Y}) \end{array}
\right).\end{equation}

Moreover, from the determinantal nature of the critical loci $ \mathcal{X} $ and $ \mathcal{Y}$, one also gets that their expected dimension is
\begin{equation} \label{exp-dim}
k - \left(1 + (n -k-1+ \sum_{i=1}^n h_i) -
n\right) = 2k - \sum_{i=1}^n h_i.
\end{equation}
We explicitly remark that, for particular choices of the $n$--tuples of projections, the dimension of $ \mathcal{X}$, or of $ \mathcal{Y}$, or of both, can be greater than the expected, e.g. see \cite{Hart-Ka,bbnt}. Finally, when the critical locus has the expected dimension, its degree is
\begin{equation} \label{exp-deg}
\binom{n-k-1+\sum_{i=1}^n h_i}{n-1}.
\end{equation} \bigskip

\subsection{The unified critical locus}
\label{sec::unif-crit-loc}

Now, we introduce the unified critical locus to restore the symmetry between the two sets of projections.
\begin{definition} \label{def-unif-crit}
Given two sets of $ n $ projections $ Q_j, P_j : \mathbb{P}^k \dashrightarrow \mathbb{P}^{h_j}$, $j=1, \dots, n$, as above, the unified critical locus is
\[
\mathcal{U} = \{ (\mathbf{X}, \mathbf{Y}) \in \mathbb{P}^k \times \mathbb{P}^k \ \vert \ \mathbf{X} \in \mathcal{X}, \mathbf{Y} \in \mathcal{Y}, Q_j(\mathbf{X}) = P_j(\mathbf{Y}), j=1,\dots, n\},
\]
where $ \mathcal{X}, \mathcal{Y} $ are the critical loci associated to the $ n $ projections.
\end{definition}

From its definition, it follows that $ \mathcal{U} $ is an algebraic variety in $ \mathbb{P}^k \times \mathbb{P}^k$.
\begin{proposition} \label{prop-id-unif-crit-locus}
The defining ideal $ I(\mathcal{U}) $ is bi--homogeneous generated by the maximal minors of $ M_{\mathcal{X}}$, $ M_{\mathcal{Y}} $ and by the $ 2 \times 2 $ minors of the matrices $$ \left( \begin{array}{ccc} P_j(\mathbf{Y}) & \vert & Q_j(\mathbf{X}) \end{array} \right), \quad j=1, \dots, n.$$
\end{proposition}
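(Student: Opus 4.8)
The plan is to unwind Definition \ref{def-unif-crit} directly, using the characterization of $\mathcal{X}$ and $\mathcal{Y}$ as determinantal loci from Proposition \ref{prop-2-1}. A point $(\mathbf{X},\mathbf{Y})\in\mathbb{P}^k\times\mathbb{P}^k$ lies on $\mathcal{U}$ if and only if three conditions hold simultaneously: $\mathbf{X}\in\mathcal{X}$, $\mathbf{Y}\in\mathcal{Y}$, and $Q_j(\mathbf{X})=P_j(\mathbf{Y})$ in $\mathbb{P}^{h_j}$ for each $j=1,\dots,n$. The first condition is cut out scheme-theoretically by the maximal minors of $M_{\mathcal{X}}$ (in the $x$-variables, with the $P_j$ as constant blocks), and the second by the maximal minors of $M_{\mathcal{Y}}$ (in the $y$-variables), by Proposition \ref{prop-2-1} applied to each critical locus. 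So the only point that needs care is translating the equality of projective points $Q_j(\mathbf{X})=P_j(\mathbf{Y})$ into bi-homogeneous polynomial equations.

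For that, the key observation is that two nonzero vectors $v,w\in\mathbb{C}^{h_j+1}$ represent the same point of $\mathbb{P}^{h_j}$ exactly when the $2\times(h_j+1)$ matrix with rows $v$ and $w$ has rank $\le 1$, i.e. when all its $2\times 2$ minors vanish. Taking $v=P_j(\mathbf{Y})$ and $w=Q_j(\mathbf{X})$ — the entries of which are linear in $\mathbf{Y}$ and in $\mathbf{X}$ respectively — the $2\times 2$ minors of $\bigl(\, P_j(\mathbf{Y}) \mid Q_j(\mathbf{X}) \,\bigr)$ are bihomogeneous of bidegree $(1,1)$, and their common zero locus (away from the loci where one of the two projections is undefined, i.e. on the centers) is precisely the condition $Q_j(\mathbf{X})=P_j(\mathbf{Y})$. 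One subtlety worth a remark: on the centers $C_j$ of $Q_j$ or of $P_j$ the corresponding vector vanishes identically, so the rank-$\le 1$ condition is automatically satisfied there; but Proposition \ref{prop-2-1} already tells us $\mathcal{X}$ contains every center of $Q_j$ and $\mathcal{Y}$ every center of $P_j$, so these degenerate points are accounted for consistently and do not spoil the set-theoretic description.

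I would then assemble the argument: let $J$ be the bi-homogeneous ideal generated by the maximal minors of $M_{\mathcal{X}}$, the maximal minors of $M_{\mathcal{Y}}$, and the $2\times 2$ minors of the $n$ matrices $\bigl(\, P_j(\mathbf{Y}) \mid Q_j(\mathbf{X}) \,\bigr)$. The discussion above shows $V(J)=\mathcal{U}$ as sets in $\mathbb{P}^k\times\mathbb{P}^k$. Since the statement only asserts that $I(\mathcal{U})$ \emph{is} generated by these minors, what remains is to check that $J$ is actually the full (radical, bi-homogeneous, saturated) ideal of $\mathcal{U}$, not merely an ideal with the right zero locus. Here I would invoke the determinantal structure: the minors of $M_{\mathcal{X}}$ generate $I(\mathcal{X})$ on the nose (Proposition \ref{prop-2-1}), likewise for $\mathcal{Y}$, and the bidegree-$(1,1)$ minors generate the ideal of the incidence condition $\{Q_j(\mathbf{X})=P_j(\mathbf{Y})\}$ because that locus is, fiberwise over the open set where $Q_j$ is defined, a linear space — the graph of a linear map — whose ideal is generated in degree one in each factor. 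The sum of these three ideals is then the ideal of the intersection, which is $\mathcal{U}$ by definition. The main obstacle, and the step I would scrutinize most carefully, is precisely this last saturation/primality bookkeeping on the product $\mathbb{P}^k\times\mathbb{P}^k$: one must make sure that adding the $2\times 2$ minors does not introduce embedded or extraneous components coming from the centers $C_j$ (where $P_j(\mathbf{Y})$ or $Q_j(\mathbf{X})$ degenerates to zero), and that the bi-homogeneous ideal one writes down is already saturated with respect to both irrelevant ideals. If a clean algebraic proof of saturatedness is awkward, the fallback is to state the result at the level of the defining ideal up to saturation, which is all that is needed for the commutative diagram \ref{main-cd} and for Theorem \ref{birational}.
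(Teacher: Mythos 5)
Your proposal is correct and follows the only route available: the paper states this proposition without any proof, treating it as an immediate unwinding of Definition \ref{def-unif-crit} via Proposition \ref{prop-2-1} together with the encoding of proportionality of $P_j(\mathbf{Y})$ and $Q_j(\mathbf{X})$ by the vanishing of the $2\times 2$ minors, exactly as you describe. Your caveat about radicality and saturation of the resulting bi--homogeneous ideal is fair but is not addressed by the paper either; its ``defining ideal'' is used in the computational sense of Example \ref{example-3-1}, so your set--theoretic argument already covers everything the paper actually relies on.
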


\begin{remark} \rm
The construction of the unified critical locus reminds the blow--up of $ \mathbb{P}^k $ at the centers of the $n$ projections $ Q_1, \dots, Q_n$, at least when the projection centers are pairwise disjoint. The main difference is that we do not use independent variables for each center, so that we get an image of the blow--up in $ \mathbb{P}^k$.
\end{remark}

The unified critical locus $ \mathcal{U} $ is naturally equipped with two projections to the critical loci $ \mathcal{X}, \mathcal{Y}$, and so we have the diagram
\begin{equation}\label{main-cd} \begin{tikzcd}  & \mathcal{U} \ar[dl, "\pi_1"'] \ar[dr, "\pi_2"] & \\ \mathcal{X} & & \mathcal{Y} \end{tikzcd}. \end{equation}

As said before, the main scope of the present paper is to study the birational geometry of the critical loci as it comes from the diagram above. To this end, we will describe the images of the maps $ \pi_1, \pi_2$, and the compositions $ \pi_2 \circ \pi_1^{-1} $ and $ \pi_1 \circ \pi_2^{-1}$. To do this, we must first understand how a critical locus changes when projections of the same scene are added or removed. This will be addressed in the following section.


\section{Nested critical loci} \label{sec::nestingcritical loci}

In this section, we study the relationship between the critical locus $ \mathcal{X}_r $ associated to the projections $ P_1, \dots, P_r $ and $ Q_1, \dots, Q_r$, and the critical locus $ \mathcal{X}_n $ associated to the projections $ P_1, \dots, P_r, \dots, P_n $ and $ Q_1, \dots, Q_r, \dots, Q_n$, for some $ n > r$.

\begin{proposition} \label{prop-nest1}
Let $ \mathcal{X}_r $ be the critical locus associated to $ P_1, \dots, P_r$ and $ Q_1, \dots, Q_r$, and let $ \mathcal{X}_n $ be the critical locus associated to the couple of projections $ P_1, \dots,P_r, \dots, P_n$ and $ Q_1, \dots, Q_r, \dots, Q_n$ for some $ n > r$. Then, set--theoretically, we have $ \mathcal{X}_n \setminus (C_{r+1} \cup \dots \cup C_n) \subseteq \mathcal{X}_r$, while, scheme--theoretically, we have $ I(\mathcal{X}_r) \subseteq I(\mathcal{X}_n) : (I(C_{r+1}) \cdots I(C_n))$, where $ C_j $ is the center of $ Q_j$, for $ j=r+1, \dots, n$.
\end{proposition}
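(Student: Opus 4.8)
The plan is to establish the scheme--theoretic inclusion by an explicit block--determinant identity, and then read off the set--theoretic statement from it.

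\emph{Setup.} First I would record two facts. The center $C_j$ of $Q_j$ is the linear subspace $\{\,\mathbf{X}\mid Q_j(\mathbf{X})=0\,\}$, so $I(C_j)$ is generated by the $h_j+1$ linear entries of the column $Q_j(\mathbf{X})$; and by Proposition \ref{prop-2-1} the ideals $I(\mathcal{X}_r)$ and $I(\mathcal{X}_n)$ are generated, respectively, by the maximal minors of $M_{\mathcal{X}_r}$ and $M_{\mathcal{X}_n}$. The geometric input is the elementary observation that $M_{\mathcal{X}_r}$ is the submatrix of $M_{\mathcal{X}_n}$ cut out by the first $r$ row--blocks and by the $k+1$ constant columns together with the first $r$ columns of linear forms; in $M_{\mathcal{X}_n}$ the remaining columns of linear forms, carrying $Q_{r+1}(\mathbf{X}),\dots,Q_n(\mathbf{X})$, vanish identically on those first $r$ row--blocks. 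Throughout I work in the standard range where the relevant matrices have at least as many rows as columns, so that ``maximal minor'' means an $(r+k+1)$-- resp.\ $(n+k+1)$--rowed minor; this is the setting in which $\mathcal{X}_r,\mathcal{X}_n$ are the expected--dimensional determinantal critical loci.

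\emph{The block identity.} Fix a maximal minor $\Delta$ of $M_{\mathcal{X}_r}$, namely the determinant of the submatrix on some set $S$ of $r+k+1$ of its rows and all $r+k+1$ of its columns. For $j=r+1,\dots,n$ choose a row $\rho_j$ in the $j$--th row--block of $M_{\mathcal{X}_n}$ and let $q_j$ be the entry of $Q_j(\mathbf{X})$ sitting in that row; as $\rho_j$ runs through the $j$--th block, $q_j$ runs through a generating set of $I(C_j)$. Consider the square submatrix $\Sigma$ of $M_{\mathcal{X}_n}$ on the rows $S\cup\{\rho_{r+1},\dots,\rho_n\}$ and \emph{all} $n+k+1$ columns. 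Ordering the columns so that the $k+1$ constant ones and the first $r$ columns of linear forms come first, $\Sigma$ is block lower triangular, $\Sigma=\left(\begin{smallmatrix}A&0\\ C&D\end{smallmatrix}\right)$, where $A$ is exactly the submatrix that computes $\Delta$, the upper right block vanishes by the observation above, and $D=\mathrm{diag}(q_{r+1},\dots,q_n)$; hence $\det\Sigma=\pm\,\Delta\,q_{r+1}\cdots q_n$. On the other hand $\Sigma$ has $n+k+1$ rows, so $\det\Sigma$ is, up to sign, a maximal minor of $M_{\mathcal{X}_n}$ and therefore lies in $I(\mathcal{X}_n)$. Letting $\Delta$ and the $q_j$ vary over generators gives $I(\mathcal{X}_r)\cdot I(C_{r+1})\cdots I(C_n)\subseteq I(\mathcal{X}_n)$, equivalently $I(\mathcal{X}_r)\subseteq I(\mathcal{X}_n):\bigl(I(C_{r+1})\cdots I(C_n)\bigr)$.

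\emph{Set--theoretic consequence and the main obstacle.} Since for any ideal $J$ one has $V\!\bigl(I(\mathcal{X}_n):J\bigr)\supseteq V(I(\mathcal{X}_n))\setminus V(J)$, taking $J=I(C_{r+1})\cdots I(C_n)$ (so $V(J)=C_{r+1}\cup\dots\cup C_n$) and combining with the ideal inclusion just proved yields $\mathcal{X}_r\supseteq\mathcal{X}_n\setminus(C_{r+1}\cup\dots\cup C_n)$. One can also argue this directly: a nonzero kernel vector $(\mathbf{Y},\lambda_1,\dots,\lambda_n)$ of the matrix $M_{\mathcal{X}_n}(\mathbf{X})$ truncates to a kernel vector $(\mathbf{Y},\lambda_1,\dots,\lambda_r)$ of $M_{\mathcal{X}_r}(\mathbf{X})$, which is again nonzero since $\mathbf{Y}=0$ and $\lambda_1=\dots=\lambda_r=0$ would force $\lambda_jQ_j(\mathbf{X})=0$ for $j>r$, hence $\lambda_j=0$ because $Q_j(\mathbf{X})\neq0$ off $C_j$. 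I expect the only delicate point to be the bookkeeping in the block identity --- checking that $\Sigma$ really attains maximal size (this is precisely where the ``enough rows'' hypothesis is used) and keeping track of the sign and of which minor of $M_{\mathcal{X}_n}$ one lands in; the remainder is formal linear algebra together with the structure of $M_{\mathcal{X}_n}$.
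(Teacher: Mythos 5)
Your proposal is correct and follows essentially the same route as the paper: the scheme--theoretic inclusion via the observation that adjoining one row from each extra block to a maximal submatrix of $M_{\mathcal{X}_r}$ yields a maximal minor of $M_{\mathcal{X}_n}$ equal to $\pm\,\Delta\,q_{r+1}\cdots q_n$ (the paper phrases this as a Laplace expansion, you as a block--triangular determinant --- the same computation), and the set--theoretic inclusion via truncating a nonzero kernel vector of $M_{\mathcal{X}_n}(\mathbf{X})$. Your bookkeeping (which entries generate $I(C_j)$, why the truncated vector is nonzero, the reduction of the product ideal to products of generators) is sound, and the degenerate case $\sum_{i\le r}h_i<k+1$ that you set aside is trivial since then $I(\mathcal{X}_r)=0$.
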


\begin{proof} For the set--theoretical part of the statement, we observe that, since the center $ C_i $ of $ Q_i$, for $ i=1, \dots, r$ is contained in both $ \mathcal{X}_n $ and $ \mathcal{X}_r$, we can take $ \mathbf{X} \in \mathcal{X}_n \setminus (C_1 \cup \dots \cup C_n)$. The matrix $ M_{\mathcal{X}_n}(\mathbf{X}) $ has rank strictly smaller than $ k+1+n$, and so the homogeneous linear system $ M_{\mathcal{X}_n}(\mathbf{X}) Z = 0 $ has non--trivial solutions, with $ Z = (\overline{Z}, z_{k+2}, \dots, z_{k+1+n})^T$. Let $ Z_0 $ be a non--zero solution. $ \overline{Z}_0 $ cannot be zero. Indeed, in case $ \overline{Z}_0 = 0$, then the linear systems above reduce to $ z_{k+1+i} Q_i(\mathbf{X}) = 0 $ for $ i=1, \dots, n$. Since $ \mathbf{X} \notin (C_1 \cup \dots \cup C_n)$, then $ Q_i(\mathbf{X}) \not= 0$ for every $ i$, and so $ z_{k+1+i} = 0$ for every $ i=1, \dots, n$. Then, $ Z_0 = 0 $ that is not possible. Let $ Z' $ be the vector obtained from $ Z_0 $ deleting the last $ n-r $ entries. Then, $ M_{\mathcal{X}_r}(\mathbf{X}) Z' = 0$, and $ Z' $ is non---zero, because of the previous argument. This implies that the rank of $ M_{\mathcal{X}_r}(\mathbf{X}) $ is strictly smaller than $ k+1+r$, and so $ \mathbf{X} \in \mathcal{X}_r$, as claimed.

For the scheme--theoretical part, let us consider an order $ k+1+r $ sub--matrix of $ M_{\mathcal{X}_r}$. If in $  M_{\mathcal{X}_n}$ we take the same rows chosen in $ M_{\mathcal{X}_r}$ as above and, moreover, a row for every other projection $ P_{r+1}, \dots, P_n $ from $ M_{\mathcal{X}_n}$, then we get an order $ k+1+n $ square sub--matrix of $ M_{\mathcal{X}_n}$. By Laplace rule, the corresponding minor in $ M_{\mathcal{X}_n} $ is the product of the one in $ M_{\mathcal{X}_r} $ and an element in $ I(C_{r+1}) \cdots I(C_n)$. Since this holds for every $ k+1+r $ sub--matrix of $ M_{\mathcal{X}_r}$, and every generator of $ I(C_{r+1}) \cdots I(C_n)$, we get the claim.
\end{proof}

\begin{remark} \rm
Under the Computer Vision standard assumption that the intersection between any two centers is empty, we have that
\[
I(\mathcal{X}_{n-1}) \subseteq I(\mathcal{X}_n) : I(C_n)
\]
up to renumbering the projections. This relation is somewhat stronger than the one in Proposition \ref{prop-nest1} because $ I(C_{r+1}) \cdots I(C_n) $ is not the defining ideal of $ C_{r+1} \cup \dots \cup C_n$, in general, for $ r+1 \not= n$. Moreover, when one considers $ n-1 $ projections, then the ideal $ I(\mathcal{X}_n) : I(C_n) $ defines either $ \mathcal{X}_n \setminus C_n$, if $ C_n $ is an irreducible component of $ \mathcal{X}_n $ with a reduced structure, or a scheme $ \mathcal{X}' $ that differs from $ \mathcal{X}_n $ because of the scheme--structure on the irreducible component $ C_n$.

For example, when we consider a couple of three projections from $ \mathbb{P}^3 $ to $ \mathbb{P}^2$, the critical locus is a set of $ 10 $ points, $ 3 $ of which are the projection centers, and the remaining $ 7 $ are in the intersection of the three critical quadrics, each associated to two of the three projections. If we choose the third projection in such a way that its center is contained in the critical quadric of the other two projections, then the critical locus is a set of $ 9 $ points, but the center of the third projection has multiplicity $ 2$ (and so, a non--reduced structure).

However, critical loci can have dimensions larger than the one of the projection centers. E.g, in \cite{Hart-Ka}, there is a classification of critical loci for a couple of three projections from $ \mathbb{P}^3 $ to $ \mathbb{P}^2$, and in particular, the authors show that the critical locus can be a curve, whilst the centers of projection are points.
\end{remark}

The examples in the Remark above can be generalized. In fact, it holds
\begin{proposition} \label{prop-nest2}
Let $ \mathcal{X}_{n-1} $ be the critical locus associated to $ P_1, \dots, P_{n-1}$ and $ Q_1, \dots,$ $ Q_{n-1}$, with $ \sum_{i=1}^{n-1} h_i \geq k+1$, and let $ \mathcal{X}_n $ be the critical locus associated to the previous projections and the further couple $ P_n$ and $ Q_n$.
Let $ \mathbf{X} \in C_{Q_n} \cap  \mathcal{X}_{n-1}$ be a point such that the rank of $ M_{\mathcal{X}_{n-1}}(\mathbf{X}) $ is equal to $ k+n-1$, and the rank of $ M_{\mathcal{X}_n}(\mathbf{X}) $ without the last column is $ k+n$. Then the dimension of the tangent space $T_{\mathbf{X}}(\mathcal{X}_{n})$ is $ k - h_n$.
\end{proposition}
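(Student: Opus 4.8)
The plan is to realize $\mathcal{X}_n$ as the pullback of a generic determinantal variety and to apply the classical description of the tangent space to such a variety at a point of corank one.

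Set $m=n+\sum_{i=1}^n h_i$ and $c=n+k+1$, and let $\Phi\colon \mathbb{A}^{k+1}\to\operatorname{Mat}_{m\times c}$ be the map $\mathbf{x}\mapsto M_{\mathcal{X}_n}(\mathbf{x})$; note that $\Phi$ is constant on the first $k+1$ columns and linear on the last $n$. By Proposition~\ref{prop-2-1} the affine cone $\widehat{\mathcal{X}}_n$ over $\mathcal{X}_n$ is the scheme-theoretic preimage $\Phi^{-1}(D_{c-1})$, where $D_{c-1}\subseteq\operatorname{Mat}_{m\times c}$ is the variety of matrices of rank $\le c-1$, whose ideal is generated by the $c\times c$ minors. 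The first step is to record that, since these minors vanish at $M:=M_{\mathcal{X}_n}(\mathbf{X})$, the chain rule yields
\[
T_{\widehat{\mathbf{X}}}\,\widehat{\mathcal{X}}_n \;=\; \bigl(d\Phi_{\widehat{\mathbf{X}}}\bigr)^{-1}\!\left(T_M D_{c-1}\right);
\]
the inclusion ``$\subseteq$'' is immediate, and for ``$\supseteq$'' one writes a general element of $I(\widehat{\mathcal{X}}_n)$ as $g=\sum a_i(\phi_i\circ\Phi)$ with $\phi_i$ the minors, and observes that $dg_{\widehat{\mathbf{X}}}=\sum a_i(\widehat{\mathbf{X}})\,d\phi_i\circ d\Phi_{\widehat{\mathbf{X}}}$ because $\phi_i(M)=0$.

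Next I would feed in the two rank hypotheses. As $\mathbf{X}\in C_{Q_n}$ we have $Q_n(\mathbf{X})=0$, so the last column of $M$ vanishes and $M$ agrees, apart from this zero column, with $M_{\mathcal{X}_n}(\mathbf{X})$ without its last column; hence $\operatorname{rk}M=k+n=c-1$, and since the first $k+n$ columns of $M$ are independent we get $\ker M=\langle e_c\rangle$, the last coordinate line. The classical description of the tangent space to a determinantal variety at a corank-one point then gives $T_M D_{c-1}=\{\,N: Ne_c\in\operatorname{im}M\,\}$, i.e. the last column of $N$ must lie in $\operatorname{im}M$. Since $d\Phi_{\widehat{\mathbf{X}}}(v)$ has vanishing first $k+1$ columns and has $Q_j(v)$ placed in the $j$-th row-block of the $j$-th of the last $n$ columns, its last column is $(0,\dots,0,Q_n(v))$, so
\[
T_{\widehat{\mathbf{X}}}\,\widehat{\mathcal{X}}_n \;=\; \{\, v\in\mathbb{A}^{k+1} : (0,\dots,0,Q_n(v))\in\operatorname{im}M \,\}.
\]

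Finally I would compute the right-hand side. A vector supported on the $n$-th row-block lies in $\operatorname{im}M$ exactly when it has the form $(0,\dots,0,P_n\bar z)$ with $P_i\bar z\in\langle Q_i(\mathbf{X})\rangle$ for all $i\le n-1$ (here $Q_i(\mathbf{X})\ne 0$ for $i\le n-1$, once more because the first $k+n$ columns of $M$ are independent). The set $V$ of such $\bar z$ is the image of $\ker M_{\mathcal{X}_{n-1}}(\mathbf{X})$ under the projection to the first $k+1$ coordinates; the hypothesis $\operatorname{rk}M_{\mathcal{X}_{n-1}}(\mathbf{X})=k+n-1$ (and $M_{\mathcal{X}_{n-1}}$ has $k+n$ columns) makes this kernel one-dimensional, say generated by $(\bar z^{\ast},z_1^{\ast},\dots,z_{n-1}^{\ast})$ with $\bar z^{\ast}\ne 0$, so $V=\langle\bar z^{\ast}\rangle$. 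One then checks $P_n\bar z^{\ast}\ne 0$: otherwise $(\bar z^{\ast},z_1^{\ast},\dots,z_{n-1}^{\ast},0)$ and $e_c$ would be two independent vectors of $\ker M$, contradicting $\operatorname{rk}M=k+n$. Hence $P_n(V)=\langle P_n\bar z^{\ast}\rangle$ is a line, and as $Q_n$ is surjective with $\ker Q_n$ of dimension $k-h_n$,
\[
\dim T_{\widehat{\mathbf{X}}}\,\widehat{\mathcal{X}}_n \;=\; \dim\ker Q_n+\dim P_n(V)\;=\;(k-h_n)+1,
\]
and therefore $\dim T_{\mathbf{X}}\mathcal{X}_n=k-h_n$. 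The step I expect to require the most care is the first reduction: verifying that the scheme structure on $\mathcal{X}_n$ really is the pulled-back determinantal one (so that the chain-rule identification of tangent spaces is valid), together with the bookkeeping that tracks which of the two hypotheses forces $\ker M=\langle e_c\rangle$ and which makes $V$ one-dimensional with $P_n\bar z^{\ast}\ne 0$.
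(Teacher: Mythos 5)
Your proof is correct, but it follows a genuinely different route from the paper's. The paper argues entirely by hand: it writes down the gradient of each maximal minor $F_{\underline{\mu}}$ of $M_{\mathcal{X}_n}$ at $\mathbf{X}$ (observing that only the derivatives with respect to $x_0,\dots,x_{h_n}$ survive), and then uses Pl\"ucker relations among the minors $D_{\underline{\lambda}}$ to exhibit an explicit basis of $h_n$ gradients spanning all the others, concluding that the Jacobian has rank exactly $h_n$. You instead invoke the classical structure theory of generic determinantal varieties: since $I(\mathcal{X}_n)$ is the pullback under the affine-linear map $\Phi$ of the ideal of $D_{c-1}$ (Proposition \ref{prop-2-1} plus the classical fact that the maximal minors generate the prime ideal of $D_{c-1}$), the chain rule reduces everything to the corank-one tangent-space formula $T_MD_{c-1}=\{N: N(\ker M)\subseteq\operatorname{im}M\}$, and the two rank hypotheses enter exactly where you say they do (forcing $\ker M=\langle e_c\rangle$, and making $V=\langle\bar z^{\ast}\rangle$ one-dimensional with $P_n\bar z^{\ast}\neq 0$; note that $\bar z^{\ast}\neq 0$ itself follows from $Q_i(\mathbf{X})\neq 0$ for $i\leq n-1$, the same observation you already made, so you may want to say so explicitly). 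Your final count $\dim Q_n^{-1}(\langle P_n\bar z^{\ast}\rangle)=k-h_n+1$ for the affine cone agrees with the paper's Jacobian rank $h_n$. What the paper's computation buys is self-containedness and an explicit basis of the conormal space at $\mathbf{X}$, obtained without appealing to the generic determinantal theory; what yours buys is brevity and a conceptual explanation of where the number $k-h_n$ comes from, at the price of citing two standard external facts.
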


\begin{proof} To prove the result, we compute the rank of the jacobian matrix of $ I(\mathcal{X}_n) $ at $ \mathbf{X} \in C_{Q_n}$. To this end, we set the following notation:
\begin{enumerate}
\item given the length $ k+n $ multi--index $ \underline{\lambda} = (\lambda_1, \dots, \lambda_{k+n})$, with $ 1 \leq \lambda_1 < \dots < \lambda_{k+n} \leq \sum_{i=1}^n h_i + n$, we set $ D_{\underline{\lambda}} $ the determinant of the sub--matrix of $ M_{\mathcal{X}_n} $ obtained by taking the rows $ \lambda_1, \dots, \lambda_{k+n}$ and the first $ k+n $ columns;
\item given the length $ k+n+1 $ multi--index $ \underline{\mu} = (\mu_1, \dots, \mu_{k+n},\mu_{k+n+1})$, with $ 1 \leq \mu_1 < \dots < \mu_{k+n+1} \leq \sum_{i=1}^n h_i + n$, we set $ F_{\underline{\mu}} $ the determinant of the sub--matrix $ M_{\underline{\mu}} $ of $ M_{\mathcal{X}_n} $ obtained by taking the rows $ \mu_1, \dots, \mu_{k+n+1}$ and all columns;
\item given a multi--index $ \underline{\tau}$, denote by $\underline{\tau'}$ the sub  multi--index given by the $ \tau_{i}'s$ such that $\tau_{i} \leq L$, where $ L = \sum_{i=1}^{n-1} h_i + n -1$ is the number of rows of $ M_{\mathcal{X}_{n-1}}$;
\item up to a coordinate change in $ \mathbb{P}^k$, we assume that $Q_n=[I_{h_n+1}|0]$ so that $ C_{Q_n} $ is defined by $ x_0 = \dots = x_{h_n} = 0$;
\item since $ M_{\mathcal{X}_{n-1}}(\mathbf{X}) $ has rank $ k+n-1$ and the matrix obtained from $ M_{\mathcal{X}_n}(\mathbf{X}) $ by canceling the last column has rank $ k+n$, there exists a length $ k+n $ multi--index $ \underline{\lambda}= (\underline{\lambda}', L+1+i)$ such that $ D_{\underline{\lambda}}(\mathbf{X}) \not= 0$, for a suitable $ i \in [0, h_n]$. Without loss of generality, we can assume $ i=0$.
\end{enumerate}

Let $ \mathbf{X} \in C_{Q_n} \cap  \mathcal{X}_{n-1}$ be a point at which the rank of $ M_{\mathcal{X}_{n-1}}(\mathbf{X}) $ is equal to $ k+n-1$, as in the statement. This implies that every minor of $ M_{\mathcal{X}_{n-1}}(\mathbf{X}) $ of order $ k+n $ vanishes, that is to say, $ D_{\underline{\lambda}}(\mathbf{X}) = 0 $ for every $ \underline{\lambda} = (\lambda_1, \dots, \lambda_{k+n}) $ such that $ \lambda_{k+n} \leq L$ i.e. when $ \underline{\lambda} =\underline{\lambda'}$.

We recall (see Proposition \ref{prop-2-1}) that the ideal $ I(\mathcal{X}_n) $ is generated by the order $ k+n+1 $ minors of $ M_{\mathcal{X}_n}$.

Let $ \underline{\mu} = (\mu_1, \dots, \mu_{k+n+1-t}, L+1+i_1, \dots, L+1+i_t) = (\underline{\mu'}, L+1+i_1, \dots, L+1+i_t)  $  where $ i_1, \dots, i_t $ are integers in the range $ [0, h_n]$, in increasing order and let $ F_{\underline{\mu}} $ be the determinant of the sub--matrix $M_{\underline{\mu}} $ of $ M_{\mathcal{X}_n} $ defined above. Of course, $ F_{\underline{\mu}} $ is a generator of $ I(\mathcal{X}_n)$.

In order to determine the rank of the jacobian matrix of $ I(\mathcal{X}_n) $ at $ \mathbf{X}$, we need to analyse the gradient $\nabla F_{\underline{\mu}}(\mathbf{X})$. We recall that the derivative of a determinant can be computed as sum of determinants, where each matrix is obtained by computing the derivative of the polynomials in a column, and by taking all other columns as they are. The derivative of $ F_{\underline{\mu}} $ with respect to a variable different from $ x_0, \dots, x_{h_n}$ evaluated at $ \mathbf{X} $ vanishes, as the last column of every determinant vanishes either because we evaluate it at $ \mathbf{X}$, or because we differentiate it with respect to a variable that does not appear on the last column. On the other end, the derivative of $ F_{\underline{\mu}} $ with respect to one variable among $ x_0, \dots, x_{h_n} $ is equal to the determinant of the matrix we obtain by differentiating the last column of the matrix $ M_{\underline{\mu}} $  with respect to the same variable. Hence, the gradient of $ F_{\underline{\mu}} $ at $ \mathbf{X} $ is
\begin{equation} \label{gradient}
\nabla F_{\underline{\mu}}(\mathbf{X}) = \sum_{j=1}^t (-1)^{t-j} D_{\underline{\mu}\setminus\{L+1+i_j\}}(\mathbf{X}) \vec{e}_{i_j}
\end{equation}
where $ \vec{e}_{0}, \dots, \vec{e}_{h_n} $ are placeholders corresponding to $ x_0, \dots, x_{h_n}$, i.e. they are the first $h_n +1$ vectors of the canonical basis of $\mathbb{C}^{k+1}$.

Of course, for $ t \leq 1$, $ \nabla F_{\underline{\mu}} = \vec{0}$, for every $ \underline{\mu}$. Indeed, for $t=0$, $F_{\underline{\mu}}$ vanishes, whilst, for $t=1,$ $F_{\underline{\mu}}= D_{\underline \mu'} x_j,$ for $ \underline{\mu} = (\underline{\mu'}, L+1+j)$.

\medskip

Our purpose is now to show that the rank of the jacobian matrix of $ I(\mathcal{X}_n) $ at $ \mathbf{X}$ is $h_n.$  We achieve this goal by showing that a basis of the vector space spanned by the gradients of the generators of $ I(\mathcal{X}) $ evaluated at $ \mathbf{X}$ is given by the $h_n$ vectors $\nabla F_{(\underline{\mu}',L+1,L+2)}(\mathbf{X})$, $\dots$, $\nabla F_{(\underline{\mu}',L+1,L+1+h_n)}(\mathbf{X}) $, for a fixed $\underline{\mu}'$. In particular in Claim $1$ we will prove that if we change $\underline{\mu}'$ we get linearly dependent gradients and in Claim $2$ we will show that, for a fixed $\underline{\mu}'$, $ \nabla F_{(\underline{\mu}',L+1,L+2)}(\mathbf{X}), \dots, \nabla F_{(\underline{\mu}',L+1,L+1+h_n)}(\mathbf{X}) $ are linearly independent and all the gradients of the generators of  $ I(\mathcal{X}_n) $ depend linearly on them.
\medskip

\noindent{\bf Claim 1}: The gradients of $ F_{(\underline{\mu_1}', L+1+i_1, L+1+i_2)} $ and $ F_{(\underline{\mu_2}', L+1+i_1, L+1+i_2)} $ are linearly dependent for every $ \underline{\mu_1}' $ and $ \underline{\mu_2}'$ with length $ k+n-1$, in $[1, L]$, and every $ i_1 < i_2 $ in the range $ [0, h_n]$.

Indeed, for the sub--matrix of $ M_{\mathcal{X}_n}(\mathbf{X}) $ obtained by canceling the last column, we can construct a Pl\"ucker relation from the multi--indices $ \underline{\mu_1}'$ and $ (\underline{\mu_2}',\ $ $ L+1+i_1, L+1+i_2)$. This relation has a priori three summands but one of them vanishes. Hence it reduces to
\begin{equation*} \begin{split}
(-1)^{k+n} & D_{(\underline{\mu_1}', L+1+i_1)}(\mathbf{X}) D_{(\underline{\mu_2}', L+1+i_2)}(\mathbf{X}) + \\ & (-1)^{k+n+1} D_{(\underline{\mu_1}', L+1+i_2)}(\mathbf{X}) D_{(\underline{\mu_2}', L+1+i_1)}(\mathbf{X}) = 0.
\end{split} \end{equation*}
This relation is equivalent to saying that the gradients of $ F_{(\underline{\mu_1}', L+1+i_1, L+1+i_2)} $ and $ F_{(\underline{\mu_2}', L+1+i_1, L+1+i_2)} $ are linearly dependent. \medskip

\noindent{\bf Claim $2$}: $ \nabla F_{(\underline{\mu}',L+1,L+2)}(\mathbf{X}), \dots, \nabla F_{(\underline{\mu}',L+1,L+1+h_n)}(\mathbf{X}) $ are a basis of the vector space spanned by the gradients of the generators of $ I(\mathcal{X}_n) $ evaluated at $ \mathbf{X}$, for a fixed $\underline{\mu}'.$

For $\underline{\lambda}'$ as in $(5)$,  $\nabla F_{(\underline{\lambda}',L+1,L+2)}(\mathbf{X}), \dots, \nabla F_{(\underline{\lambda}',L+1,L+1+h_n)}(\mathbf{X}) $ are linearly independent as $ D_{(\underline{\underline{\lambda}'},L+1)}(\mathbf{X}) \not= 0$.

From the previous claim and the following identity, that is an immediate consequence of (\ref{gradient}),
\[
\begin{split}
D_{(\underline{\lambda}',L+1)}(\mathbf{X}) \nabla F_{(\underline{\lambda}',L+1+i,L+1+j)}(\mathbf{X}) -& D_{(\underline{\lambda}',L+1+i)}(\mathbf{X}) \nabla F_{(\underline{\lambda}',L+1,L+1+j)}(\mathbf{X}) +\\ + D_{(\underline{\lambda}',L+1+j)}(\mathbf{X}) & \nabla F_{(\underline{\lambda}',L+1,L+1+i)}(\mathbf{X}) = \vec{0},
\end{split}
\]
we get that all gradients of generators of $ I(\mathcal{X}_n) $ associated to multi--indices $ (\underline{\lambda}', L+1+i_1, L+1+i_2) $ are in the span of the $ h_n $ gradients above. To complete the proof of the claim, we prove that $ \nabla F_{(\underline{\mu}', L+1+i_1, \dots, L+1+i_t)}(\mathbf{X}) $ linearly depends on the gradients of $ F_{(\underline{\tau}', L+1+i_1, L+1+i_2)}, \dots, F_{(\underline{\tau}', L+1+i_1, L+1+i_t)} $ evaluated at $ \mathbf{X}$, for a suitable $ \underline{\tau'}$. To avoid trivial cases, we assume that $ \nabla F_{(\underline{\mu}', L+1+i_1, \dots, L+1+i_t)}(\mathbf{X}) \not= \vec{0}$. Then, $ D_{(\underline{\mu}', L+1+i_1, \dots, L+1+i_t) \setminus{L+1+i_j}}(\mathbf{X}) \not=0$ for a suitable $ j$. To fix ideas, we assume $ j=t$. Since the rank of $ M_{\mathcal{X}_{n-1}}(\mathbf{X}) $ is equal to $ k+n-1$, whilst the one of $ M_{\mathcal{X}_n}(\mathbf{X}) $ without its last column is equal to $ k+n$, there exist $t-2$ suitable rows in $ M_{\mathcal{X}_{n-1}}(\mathbf{X}) $ with which to replace $ t-2 $ rows among $ L+1+i_1, \dots, L+1+i_{t-1}$, so to get $ D_{(\underline{\tau'},L+1+i_h)}(\mathbf{X}) \not= 0$. To fix ideas, we set $ h=1$.

The equality
\[
\begin{split}
\sum_{j=2}^t (-1)^{t-j+1} & D_{(\underline{\mu}', L+1+i_1, \dots, L+1+i_t)\setminus\{L+1+i_j\}}(\mathbf{X}) \nabla F_{(\underline{\tau}',L+1+i_1,L+1+i_j)}(\mathbf{X}) + \\ + & D_{(\underline{\tau}',L+1+i_1)}(\mathbf{X}) \nabla F_{(\underline{\mu}', L+1+i_1, \dots, L+1+i_t)}(\mathbf{X}) = \vec{0},
\end{split}
\]
shows that $ \nabla F_{(\underline{\mu}', L+1+i_1, \dots, L+1+i_t)}(\mathbf{X}) $ is a linear combination of $$ \nabla F_{(\underline{\tau}',L+1,L+2)}(\mathbf{X}), \dots, \nabla F_{(\underline{\tau}',L+1,L+1+h_n)}(\mathbf{X}) $$ for every length $ k+n+1-t $ multi--index $ \underline{\mu}'$, and every $ t \geq 2$. Now, we prove the above equality. It is easy to check that the coefficient of $ \vec{e}_{i_1} $ is equal to
\[
\begin{split}
(-1)^{t-1} D_{(\underline{\tau}',L+1+i_1)}(\mathbf{X})& D_{(\underline{\mu}',L+1+i_2, \dots, L+1+i_t)}(\mathbf{X}) -\\ - \sum_{j=2}^t (-1)^{t-j+1} D_{(\underline{\tau}',L+1+i_j)}(\mathbf{X}) & D_{(\underline{\mu}',L+1+i_1, \dots, L+1+i_t)\setminus\{L+1+i_j\}}(\mathbf{X})
\end{split}
\]
while the coefficients of $ \vec{e}_{i_2}, \dots, \vec{e}_{i_t} $ vanish. The coefficient of $ \vec{e}_{i_1} $ is the Pl\"ucker relation obtained from the multi--indices $ \underline{\tau}' $ and $ (\underline{\mu}', L+1+i_1, \dots, L+1+i_t) $ and so it vanishes as well, and the proof of the claim is complete.

It follows that the rank of the jacobian matrix of $ I(\mathcal{X}_n) $ at $ \mathbf{X} $ is equal to $ h_n$.
\end{proof}

To complete the analysis, we describe the critical locus $ \mathcal{X}_n $ at $ \mathbf{X}$.
\begin{corollary} \label{corol-nest2}
In the same assumptions as Proposition \ref{prop-nest2}, we have that the dimension of $ \mathcal{X}_n $ at $ \mathbf{X} $ is at most $ k - h_n$. Moreover, if $ \dim \mathcal{X}_n = k-h_n-1$ at $ \mathbf{X}$, then $ \mathbf{X} $ is singular for $ \mathcal{X}_n$. Lastly, if $ C_{Q_n} \subseteq \mathcal{X}_n $ and $ \dim \mathcal{X}_n = k-h_n-1$, then $ C_{Q_n} $ is an irreducible component of $ \mathcal{X}_n $ with multiplicity $ 2$.
\end{corollary}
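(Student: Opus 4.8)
The plan is to read off all three assertions from the tangent-space computation in Proposition \ref{prop-nest2}, which tells us that $\dim T_{\mathbf{X}}(\mathcal{X}_n) = k - h_n$, combined with the standard relation $\dim_{\mathbf{X}} \mathcal{X}_n \leq \dim T_{\mathbf{X}}(\mathcal{X}_n)$ and the known fact (equation \brref{exp-dim}) that the expected dimension of $\mathcal{X}_n$ is $2k - \sum_{i=1}^n h_i$.

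\textbf{First assertion.} Since the Zariski tangent space at $\mathbf{X}$ has dimension $k-h_n$, every irreducible component of $\mathcal{X}_n$ through $\mathbf{X}$ has dimension at most $k-h_n$; hence the local dimension of $\mathcal{X}_n$ at $\mathbf{X}$ is at most $k-h_n$. I would state this as the immediate consequence of Proposition \ref{prop-nest2} together with the inequality $\dim_{\mathbf{X}} \mathcal{X}_n \leq \dim T_{\mathbf{X}} \mathcal{X}_n$ valid at any point of any scheme.

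\textbf{Second assertion.} If $\dim_{\mathbf{X}} \mathcal{X}_n = k - h_n - 1$, then at $\mathbf{X}$ the tangent space has dimension $k-h_n > k-h_n-1 = \dim_{\mathbf{X}} \mathcal{X}_n$, so by definition $\mathbf{X}$ is a singular point of $\mathcal{X}_n$. This is a one-line deduction from the strict inequality between tangent space dimension and local dimension.

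\textbf{Third assertion.} This is the only part requiring real work. Suppose $C_{Q_n} \subseteq \mathcal{X}_n$ and $\dim \mathcal{X}_n = k - h_n - 1$. Note $\dim C_{Q_n} = k - h_n - 1$ as well (the center of a projection $\mathbb{P}^k \dashrightarrow \mathbb{P}^{h_n}$ has dimension $k - h_n - 1$), so $C_{Q_n}$ is an irreducible component of $\mathcal{X}_n$ on dimension grounds. To get multiplicity $2$, the idea is to compute the multiplicity of $C_{Q_n}$ as a component of the determinantal scheme $\mathcal{X}_n$ by restricting to a generic point $\mathbf{X}$ of $C_{Q_n}$ and examining the local structure transverse to $C_{Q_n}$: choosing coordinates as in item $(4)$ of the proof of Proposition \ref{prop-nest2} so that $C_{Q_n} = \{x_0 = \dots = x_{h_n} = 0\}$, the analysis there shows that, modulo the ideal of $C_{Q_n}$, the generators $F_{\underline{\mu}}$ of $I(\mathcal{X}_n)$ whose gradients are nonzero are, up to units, the quadratic forms $D_{(\underline{\mu}', L+1+i)} x_j - D_{(\underline{\mu}', L+1+j)} x_i$ in the transverse variables $x_0, \dots, x_{h_n}$, while the linear generators $F_{(\underline{\mu}', L+1+j)} = D_{\underline{\mu}'} x_j$ all have coefficient $D_{\underline{\mu}'}$ vanishing on $\mathcal{X}_{n-1} \supseteq \mathbf{X}$. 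One then argues that, localizing at the generic point of $C_{Q_n}$, the ideal $I(\mathcal{X}_n)$ cuts out a scheme supported on the line $\mathbb{P}^{h_n} \cong C_{Q_n}^{\perp}$-direction whose length is $2$: concretely, using the nonvanishing $D_{(\underline{\lambda}', L+1)}(\mathbf{X}) \neq 0$ from item $(5)$, the quadrics $D_{(\underline{\lambda}', L+1)} x_i - D_{(\underline{\lambda}', L+1+i)} x_0$ for $i = 1, \dots, h_n$ let one eliminate $x_1, \dots, x_{h_n}$ in terms of $x_0$, and one is left with a single equation that is $x_0^2$ times a unit (this is where the remaining Plücker-type relations among the $D$'s force the constant and linear terms in $x_0$ to vanish), giving a fat point of length $2$ in the normal slice, hence multiplicity $2$ for $C_{Q_n}$.

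\textbf{Main obstacle.} The hard part is the third assertion: showing the transverse multiplicity is exactly $2$ rather than merely $\geq 2$. The inequality $\geq 2$ is essentially automatic from the second assertion (a reduced component would be smooth at its generic point, contradicting singularity — provided one checks $\mathbf{X}$ generic in $C_{Q_n}$ still satisfies the rank hypotheses of Proposition \ref{prop-nest2}, which holds on a dense open subset by semicontinuity of rank). The upper bound $\leq 2$ requires exhibiting enough elements of $I(\mathcal{X}_n)$ in the local ring at the generic point of $C_{Q_n}$ to cut the normal slice down to length $2$; this amounts to a careful bookkeeping of which $2\times 2$-minor-type quadrics survive modulo $I(C_{Q_n})^2$ and checking, via the Plücker relations already used in Claim $2$, that they generate $(x_0, \dots, x_{h_n})^2$ plus one further linear combination, leaving a length-$2$ scheme. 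I would carry this out by working in the $(h_n+1)$ transverse variables at a generic point, reducing to the case $h_n$ arbitrary but the ambient slice an affine space, and invoking the structure of the ideal of $2\times 2$ minors of the $2 \times (h_n+1)$ matrix with rows $(x_0, \dots, x_{h_n})$ and $(D_{(\underline{\lambda}',L+1)}, \dots, D_{(\underline{\lambda}',L+1+h_n)})$, together with the linear forms $D_{\underline{\mu}'} x_j$ that vanish to higher order.
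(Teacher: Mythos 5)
The paper offers no proof of this corollary at all --- it is stated as an immediate consequence of Proposition \ref{prop-nest2} --- so there is nothing to compare your argument against except the proposition itself. Your treatment of the first two assertions is correct and is surely what the authors intend: $\dim_{\mathbf{X}}\mathcal{X}_n\le\dim T_{\mathbf{X}}\mathcal{X}_n=k-h_n$, and strict inequality at $\mathbf{X}$ is the definition of a singular point. Your identification of $C_{Q_n}$ as a component on dimension grounds, and your observation that multiplicity $\ge 2$ follows because a generically reduced component would force smoothness of $\mathcal{X}_n$ at a general point of $C_{Q_n}$, are also fine (modulo the genericity caveat you correctly flag: the rank hypotheses must hold at the generic point of $C_{Q_n}$, and ``$\mathrm{rank}\,M_{\mathcal{X}_{n-1}}(\mathbf{X})=k+n-1$'' is not an open condition, since it includes the closed condition $\mathbf{X}\in\mathcal{X}_{n-1}$; if the generic point of $C_{Q_n}$ lies outside $\mathcal{X}_{n-1}$ the Jacobian there has rank $h_n+1$ and the component is reduced, so this really must be assumed).

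The gap is in the upper bound, multiplicity $\le 2$, and it sits exactly at the step you phrase as ``one is left with a single equation that is $x_0^2$ times a unit.'' The tangent-space computation gives embedding dimension $1$ for the local ring $A$ at the generic point $\eta$ of $C_{Q_n}$, so $A\cong\kappa(\eta)[t]/(t^m)$ with $m\ge 2$; nothing so far bounds $m$ above. After you eliminate $x_1,\dots,x_{h_n}$ via the minors $D_{(\underline{\lambda}',L+1)}x_j-D_{(\underline{\lambda}',L+1+j)}x_0$, every remaining generator restricts on the normal slice to $x_0$ times a function: the $t=1$ generators give $x_0\cdot D_{\underline{\mu}'}\cdot(\text{unit})$ with $D_{\underline{\mu}'}\in I(\mathcal{X}_{n-1})$ vanishing along $C_{Q_n}$, and the $t=2$ generators give, via the Pl\"ucker relation of Claim 1, $x_0$ times an element that again lies in (a multiple of) $I(\mathcal{X}_{n-1})$ locally. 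This is precisely what your parenthetical establishes --- that the constant and linear terms in $x_0$ vanish, i.e.\ that every residual equation is divisible by $x_0^2$ --- but it does not show that any of them has a \emph{unit} coefficient on $x_0^2$. Concretely, the length equals $1+\mathrm{ord}_{x_0}(\mathfrak{a})$ where $\mathfrak{a}$ is generated by the restrictions to the slice of the $D_{\underline{\mu}'}$ and the Pl\"ucker products, and $\mathrm{ord}_{x_0}(\mathfrak{a})=1$ amounts to the assertion that the distinguished normal direction $\vec{v}=\sum_i D_{(\underline{\lambda}',L+1+i)}(\mathbf{X})\,\vec{e}_i$ (the one extra tangent direction of $\mathcal{X}_n$ at $\mathbf{X}$ beyond $T_{\mathbf{X}}C_{Q_n}$) is \emph{not} tangent to $\mathcal{X}_{n-1}$ at $\eta$. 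If all the $D_{\underline{\mu}'}$ vanish to order $\ge 2$ along the slice, the multiplicity is $\ge 3$. This transversality is not a consequence of the two rank hypotheses as stated, and your outline neither proves it nor identifies it as an additional input; until it is established (or derived from some further genericity assumption), the claim ``multiplicity exactly $2$'' is not proved.
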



\section{Study of the maps $ \pi_1, \pi_2, \pi_2 \circ \pi_1^{-1}, \pi_1 \circ \pi_2^{-1}$}
\label{maps-sect}

In this section, we consider the commutative diagram (\ref{main-cd}), and study the projections $ \pi_1, \pi_2$, and the composite maps $ \pi_2 \circ \pi_1^{-1}, \pi_1 \circ \pi_2^{-1}$. Some natural guesses are that $ \pi_i, i=1,2$, is surjective or at least dominant, and the two last ones are birational. However, Example \ref{example-3-1} shows that these naive guesses do not always hold.

For explanatory purposes, because of the symmetry between $ \pi_1: \mathcal{U} \to \mathcal{X} $ and $ \pi_2: \mathcal{U} \to \mathcal{Y}$, we address map $ \pi_1$ in the case of one projection and map $\pi_2 \circ \pi_1^{-1}$ for the relationship between $ \mathcal{X}$ and $ \mathcal{Y}$.

At first, we characterize the points of the image $ \pi_1(\mathcal{U}) \subseteq \mathcal{X}$ in the most general situation (see Theorem \ref{fibra}), and then we adapt it to the case of interest in Multiview Geometry, namely when the projection centers do not intersect (see Corollary \ref{cor-4-2}).
\begin{theorem} \label{fibra}
Let $ \mathcal{X} \subset \mathbb{P}^k $ be the critical locus associated to the two $n$--tuples of projections $ Q_1, \dots, Q_n $ and $ P_1, \dots, P_n$. Let $ \mathbf{X} \in \mathcal{X}$. $ \mathbf{X} \in \pi_1(\mathcal{U}) $ if and only if either $ \mathbf{X} \notin C_1 \cup \dots \cup C_n$, or $ \mathbf{X} \in C_{r+1} \cap \dots \cap C_n \cap (\mathcal{X}_r \setminus \{ C_1 \cup \dots \cup C_r \}) $ for some $ 1 \leq r < n$, up to renumbering the projections, where $ \mathcal{X}_r $ is the critical locus for the associated conjugate projections $ P_1, \dots, P_r$ and $ Q_1, \dots, Q_r$.
\end{theorem}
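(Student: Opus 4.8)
The strategy is to translate membership in $\pi_1(\mathcal{U})$ into a rank condition on the matrix $M_{\mathcal{X}_r}$ attached to a sub-tuple of the projections, and then to interpret that condition by means of Proposition~\ref{prop-2-1}. By Definition~\ref{def-unif-crit} and Proposition~\ref{prop-id-unif-crit-locus}, a point $\mathbf{X}\in\mathcal{X}$ lies in $\pi_1(\mathcal{U})$ exactly when there is $\mathbf{Y}\in\mathcal{Y}$ such that, for every $j$, the vectors $P_j(\mathbf{Y})$ and $Q_j(\mathbf{X})$ are proportional (equivalently, the $2\times 2$ minors of $(P_j(\mathbf{Y})\mid Q_j(\mathbf{X}))$ all vanish). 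Renumbering the projections, write $A=\{1,\dots,r\}$ for the indices $j$ with $\mathbf{X}\notin C_j$ and $B=\{r+1,\dots,n\}$ for those with $\mathbf{X}\in C_j$; with this choice the requirement ``$\mathbf{X}\notin C_1\cup\dots\cup C_r$'' in the statement is automatic, and $\mathcal{X}_r$ is the critical locus of the projections indexed by $A$.

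For $j\in B$ one has $Q_j(\mathbf{X})=0$ as a vector, so the proportionality condition for that $j$ is vacuous, and the only constraints on a candidate $\mathbf{Y}$ come from $j\in A$, where $Q_j(\mathbf{X})\neq 0$ forces $P_j(\mathbf{Y})=-c_j\,Q_j(\mathbf{X})$ for a unique scalar $c_j$. The first key observation is that the relations $P_j(\mathbf{Y})+c_j\,Q_j(\mathbf{X})=0$, $j=1,\dots,r$, say exactly that $(\mathbf{Y},c_1,\dots,c_r)$ lies in the kernel of $M_{\mathcal{X}_r}(\mathbf{X})$. This gives the ``only if'' direction immediately: if $\mathbf{X}\in\pi_1(\mathcal{U})$, such a $\mathbf{Y}\neq 0$ and such scalars exist, so $M_{\mathcal{X}_r}(\mathbf{X})$ has a nonzero kernel vector, its rank is not maximal, and $\mathbf{X}\in\mathcal{X}_r$ by Proposition~\ref{prop-2-1}. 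If $B=\emptyset$ this is the first alternative of the statement; if $A$ and $B$ are both nonempty it is the second.

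For the ``if'' direction I would start from $\mathbf{X}\in\mathcal{X}_r$ (this also covers $r=n$, $B=\emptyset$, where $\mathcal{X}_r=\mathcal{X}\ni\mathbf{X}$), pick a nonzero $W=(\overline{W},w_1,\dots,w_r)$ in the kernel of $M_{\mathcal{X}_r}(\mathbf{X})$, and first note that $\overline{W}\neq 0$: otherwise the kernel relations would read $w_j\,Q_j(\mathbf{X})=0$ with $Q_j(\mathbf{X})\neq 0$ for all $j\le r$, forcing $W=0$. Put $\mathbf{Y}:=\overline{W}$. The relations $P_j(\mathbf{Y})=-w_j\,Q_j(\mathbf{X})$ for $j\in A$, together with $Q_j(\mathbf{X})=0$ for $j\in B$, show that $(\mathbf{X},\mathbf{Y})$ satisfies every proportionality condition, so it remains only to check that $\mathbf{Y}\in\mathcal{Y}$. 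I would prove this by exhibiting a nonzero kernel vector of $M_{\mathcal{Y}}(\mathbf{Y})$: if $w_j=0$ for some $j\in A$ then $P_j(\mathbf{Y})=0$, i.e.\ $\mathbf{Y}$ lies in the center of $P_j$, and Proposition~\ref{prop-2-1} (with the two $n$-tuples exchanged) already yields $\mathbf{Y}\in\mathcal{Y}$; otherwise every $w_j$ with $j\in A$ is nonzero, and the vector with point block $\mathbf{X}$ and remaining entries $1/w_j$ for $j\in A$ and $0$ for $j\in B$ is readily checked to lie in the kernel of $M_{\mathcal{Y}}(\mathbf{Y})$, so again $\mathbf{Y}\in\mathcal{Y}$. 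In both cases $(\mathbf{X},\mathbf{Y})\in\mathcal{U}$, hence $\mathbf{X}\in\pi_1(\mathcal{U})$.

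The step I expect to be the main obstacle is exactly this last verification that $\mathbf{Y}\in\mathcal{Y}$: a priori $\mathbf{Y}$ is only an auxiliary vector extracted from a syzygy of $M_{\mathcal{X}_r}(\mathbf{X})$, and there is no formal reason for it to be critical for the second $n$-tuple; the idea of recycling $\mathbf{X}$ itself as the point block of a kernel vector of $M_{\mathcal{Y}}(\mathbf{Y})$, together with the reduction of the degenerate indices ($w_j=0$) to the centers of the $P_j$, is what closes the argument. One should also keep track of the genuinely degenerate situation where $\mathbf{X}$ belongs to all the centers $C_1,\dots,C_n$ at once (so $A=\emptyset$); this does not occur under the usual assumption that the centers are pairwise disjoint with $n\ge 2$, and it falls under the convention $\mathcal{X}_0=\mathbb{P}^k$.
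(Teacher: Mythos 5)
Your proof is correct and follows essentially the same route as the paper: both directions hinge on reading membership in $\mathcal{U}$ as the existence of a nonzero kernel vector of $M_{\mathcal{X}_r}(\mathbf{X})$, and on certifying $\mathbf{Y}\in\mathcal{Y}$ either via the centers of the $P_j$ or via the reciprocal kernel vector $(\mathbf{X},1/w_1,\dots,1/w_r,0,\dots,0)$ of $M_{\mathcal{Y}}(\mathbf{Y})$. The only (harmless) difference is organizational: you prove the ``only if'' direction directly by letting $r$ be the number of centers not containing $\mathbf{X}$, whereas the paper argues the converse by contradiction after invoking Proposition~\ref{prop-nest1}.
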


\begin{proof} Let us assume first that $ \mathbf{X} \notin C_1 \cup \dots \cup C_n$, for $ \mathbf{X} \in \mathcal{X}$.

As a current notation, we set $M_{\mathcal{X}}(\mathbf{X})$ the matrix $M_{\mathcal{X}}$ evaluated at $ \mathbf{X}$. The rank of
$ M_{\mathcal{X}}(\mathbf{X}) $ is smaller than $ k+n+1 $ and so the homogeneous linear system $ M_{\mathcal{X}}(\mathbf{X}) \overline{Y} = 0 $ has non--trivial solutions, where $ \overline{Y} = (Y, v_1, \dots, v_n)^T$. We can rewrite the previous system as $$ P_i(Y) + v_i Q_i(\mathbf{X}) = 0, \qquad i=1, \dots, n.$$ We remark that $ Y \not= 0$. Indeed, by contradiction, if $ Y = 0$, then $ v_i \ Q_i(\mathbf{X}) = 0$, too, and so, for $i=1, \dots, n$, $ v_i = 0 $ because $ \mathbf{X} \notin C_1 \cup \dots \cup C_n$, and this is not possible, since $ \overline{Y} $ is non--trivial. Hence, every such $ Y $ gives us a projective point $ \mathbf{Y}$. If $ \mathbf{Y} $ is in the center of projection $ P_j $ for some $ j=1, \dots, n$, then it belongs to the critical locus $ \mathcal{Y}$, as previously remarked. Otherwise, the homogeneous linear system $ M_{\mathcal{Y}}(\mathbf{Y}) \overline{X} = 0 $ has $ (X, 1/v_1, \dots, 1/v_n)^T $ as solution, and so the rank of $ M_{\mathcal{Y}}(\mathbf{Y}) $ is at most $k+n$, and hence $ \mathbf{Y} $ is in the critical locus $ \mathcal{Y}$. Therefore, the couple $ (\mathbf{X}, \mathbf{Y}) $ is in the unified critical locus $ \mathcal{U}$, and $\mathbf{X} \in \pi_1(\mathcal{U})$.

Now, we assume that $ \mathbf{X} \in C_{r+1} \cap \dots \cap C_n \cap (\mathcal{X}_r \setminus \{ C_1 \cup \dots \cup C_r \}) $ for some $ 1 \leq r < n$, up to renumbering the projections.

Let $ M_{\mathcal{X}_r}(\mathbf{X}) $ be the submatrix of $ M_{\mathcal{X}}(\mathbf{X}) $ obtained by delating the rows corresponding to projections $ P_{r+1}, \dots, P_n $ and the last $ n-r $ columns. Since $ \mathbf{X} \in \mathcal{X}_r$, the homogeneous linear system $ M_{\mathcal{X}_r}(\mathbf{X}) \overline{Y} = 0 $ has non--trivial solutions, where $ \overline{Y} = (Y, v_1, \dots, v_r)^T$. As in the first part of the proof, in a non--zero solution, $ Y \not= 0$.

To prove that the projective point $ \mathbf{Y} $ corresponding to $ Y $ verifies $ (\mathbf{X}, \mathbf{Y}) $ $ \in \mathcal{U}$, one can argue as in the first part of the proof. The main difference is that, in this case, a solution of $ M_{\mathcal{Y}}(\mathbf{Y}) \overline{X} = 0 $ is $ (X, 1/v_1, \dots, 1/v_r, 0, \dots, 0)^T $ and hence $ \mathbf{X} \in \pi_1(\mathcal{U})$ also in this case.

To prove the converse, because of Proposition \ref{prop-nest1}, it is enough to assume that $ \mathbf{X} \in C_{r+1} \cap \dots \cap C_n$, with $ \mathbf{X} \notin \mathcal{X}_r$, for some $ 1 \leq r < n$, up to renumbering the projections, and to get a contradiction.

Let us assume that there exists $ \mathbf{Y} $ such that $ (\mathbf{X}, \mathbf{Y}) \in \mathcal{U}$ so that $ \mathbf{X} \in \pi_1(\mathcal{U})$. Since $ \mathbf{X} \notin \mathcal{X}_r$, then $ \mathbf{X} \notin C_1 \cup \dots \cup C_r$, and so $ Q_i(\mathbf{X}) \not= 0 $ for $ 1 \leq i \leq r$. Furthermore, by definition of unified critical locus, $ P_i(\mathbf{Y}) = Q_i(\mathbf{X}) $ for $ i=1, \dots, n$. Hence, the homogeneous linear system $ M_{\mathcal{X}_r}(\mathbf{X}) \overline{Y} = 0 $ has a non--trivial solution and so both the rank of $ M_{\mathcal{X}_r}(\mathbf{X}) $ is less than $ k+r+1 $ and $ \mathbf{X} \in \mathcal{X}_r$, that is not possible, because of our assumption.
\end{proof}

In Examples \ref{example-P3P2P2} and \ref{example-3-1} in next section, we illustrate both cases in Theorem \ref{fibra}. Indeed, for $ r=2$, in the list of the main properties of $ \pi_1 $ and $ \pi_2$, cases $(5), (6)$ correspond to points not in the union of the centers, and, for $ r=3$, the cases $(a), (b), (c), (d)$ in the list for $ \tilde{\pi}_1, \tilde{\pi}_2$, correspond to points either not in the union of the centers, or in a center and in the critical locus for two projections over three.

\begin{corollary} \label{cor-4-2}
Assume $ C_i \cap C_j = \emptyset $ for $ 1 \leq i < j \leq n$. Let $ \mathbf{X} \in \mathcal{X}$, and, up to renumbering the projections, let $ \mathcal{X}_{n-1} $ be the critical locus for the associated conjugate projections $ P_1, \dots, P_{n-1}$ and $ Q_1, \dots, Q_{n-1}$. Then, $ \mathbf{X} \in \pi_1(\mathcal{U}) $ if and only if either $ \mathbf{X} \notin C_1 \cup \dots \cup C_n$, or $ \mathbf{X} \in C_n \cap \mathcal{X}_{n-1}$.
\end{corollary}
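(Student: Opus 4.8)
The plan is to deduce this directly from Theorem \ref{fibra}, using the pairwise-disjointness hypothesis to collapse the combinatorial alternatives that appear there. First I would recall the content of Theorem \ref{fibra}: a point $\mathbf{X} \in \mathcal{X}$ lies in $\pi_1(\mathcal{U})$ if and only if either $\mathbf{X} \notin C_1 \cup \dots \cup C_n$, or, after a suitable renumbering of the projections, $\mathbf{X} \in C_{r+1} \cap \dots \cap C_n \cap (\mathcal{X}_r \setminus (C_1 \cup \dots \cup C_r))$ for some $1 \leq r < n$. Since the union $C_1 \cup \dots \cup C_n$ is insensitive to renumbering, the first alternative already coincides with the first alternative in the statement of the corollary, so only the second alternative needs to be analysed, and it only matters when $\mathbf{X}$ actually lies in some center.

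The key observation is then that disjointness severely restricts the second alternative. If $\mathbf{X} \in C_1 \cup \dots \cup C_n$, then, the centers being pairwise disjoint, $\mathbf{X}$ lies in exactly one of them; fix a renumbering so that this center is $C_n$. In the second alternative of Theorem \ref{fibra} the centers $C_{r+1}, \dots, C_n$ form a family of $n-r \geq 1$ subspaces all of which must contain $\mathbf{X}$, so disjointness forces $n-r = 1$, i.e. $r = n-1$, and the unique center involved is $C_n$. Hence the second alternative reduces to $\mathbf{X} \in C_n \cap (\mathcal{X}_{n-1} \setminus (C_1 \cup \dots \cup C_{n-1}))$; but $\mathbf{X} \in C_n$ is disjoint from each $C_i$ with $i < n$, so the set difference is vacuous and the condition is simply $\mathbf{X} \in C_n \cap \mathcal{X}_{n-1}$, as desired.

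For the converse I would check that $\mathbf{X} \in C_n \cap \mathcal{X}_{n-1}$ feeds back into Theorem \ref{fibra} with $r = n-1$ and the identity renumbering: disjointness gives $\mathbf{X} \notin C_1 \cup \dots \cup C_{n-1}$, so $\mathbf{X} \in C_n \cap (\mathcal{X}_{n-1} \setminus (C_1 \cup \dots \cup C_{n-1}))$, and Theorem \ref{fibra} yields $\mathbf{X} \in \pi_1(\mathcal{U})$. Combining this with the (renumbering-free) first alternative gives the stated equivalence.

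There is essentially no hard analytic step here; the proof is a bookkeeping argument on top of Theorem \ref{fibra}. The only point requiring care is the interplay of renumberings: one must make sure that the renumbering used to place the center containing $\mathbf{X}$ in the last slot is the same renumbering used throughout, and that the locus $\mathcal{X}_r$ with $r = n-1$ for the index set $\{1,\dots,n\}\setminus\{n\}$ is exactly the $\mathcal{X}_{n-1}$ named in the statement. I would therefore fix a single renumbering at the outset of the proof and keep it for the whole argument.
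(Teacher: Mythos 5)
Your deduction is correct and matches the paper's intent: the corollary is stated without proof as an immediate consequence of Theorem \ref{fibra}, and your argument (pairwise disjointness forces $r=n-1$ in the second alternative and makes the set difference $\mathcal{X}_{n-1}\setminus(C_1\cup\dots\cup C_{n-1})$ redundant for points of $C_n$) is exactly the intended bookkeeping. No gaps.
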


We can more precisely describe the fibre over $ \mathbf{X} \in \pi_1(\mathcal{U}) \subseteq \mathcal{X}$, under suitable assumptions on $ \mathbf{X}$.
\begin{proposition} \label{fibra-1}
Let $ \mathbf{X} \in \mathcal{X}$. If $ \mathbf{X} $ is smooth for $ \mathcal{X} $ and $ \mathbf{X} \notin C_1 \cup \dots \cup C_n$, then $ \vert \pi_1^{-1}(\mathbf{X}) \vert = 1$.
\end{proposition}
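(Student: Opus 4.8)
The plan is to show that the linear-algebra construction used in the first part of the proof of Theorem~\ref{fibra} produces, under the smoothness hypothesis, a \emph{unique} projective point $\mathbf{Y}$. Recall the setup: since $\mathbf{X}\notin C_1\cup\dots\cup C_n$, the matrix $M_{\mathcal{X}}(\mathbf{X})$ has rank $\le k+n$, and any non-trivial solution $\overline{Y}=(Y,v_1,\dots,v_n)^T$ of $M_{\mathcal{X}}(\mathbf{X})\overline{Y}=0$ has $Y\ne 0$ (the argument in Theorem~\ref{fibra}); each such solution produces a point in the fibre, and conversely every $(\mathbf{X},\mathbf{Y})\in\mathcal{U}$ arises this way because $P_i(\mathbf{Y})=Q_i(\mathbf{X})$ forces $(Y,\text{scalars})$ to solve the system. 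So $\pi_1^{-1}(\mathbf{X})$ is in bijection with the set of lines in $\ker M_{\mathcal{X}}(\mathbf{X})$ that project non-trivially to the first $k+1$ coordinates; it suffices to prove $\dim\ker M_{\mathcal{X}}(\mathbf{X})=1$, i.e.\ $\operatorname{rk} M_{\mathcal{X}}(\mathbf{X})=k+n$ exactly.

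The key step is to relate the kernel of $M_{\mathcal{X}}(\mathbf{X})$ to the tangent space $T_{\mathbf{X}}\mathcal{X}$. First I would observe that since $\mathbf{X}$ is a smooth point of the determinantal variety $\mathcal{X}$, which has the expected dimension $2k-\sum h_i$ (equation~\eqref{exp-dim}), the generic-determinantal/Eagon--Northcott picture applies locally at $\mathbf{X}$: the expected codimension $1+(n-k-1+\sum h_i)-n = \sum h_i - k$ is achieved, so $M_{\mathcal{X}}$ has, at $\mathbf{X}$, corank exactly $1$ — that is, $\operatorname{rk} M_{\mathcal{X}}(\mathbf{X}) = (k+n+1)-1 = k+n$. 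Concretely, if the corank were $\ge 2$ then all $(k+n)\times(k+n)$ minors of $M_{\mathcal{X}}$ would vanish at $\mathbf{X}$ together with their first derivatives, forcing $\mathbf{X}$ into the singular locus of $\mathcal{X}$ (this is the standard fact that the expected-dimension locus of a determinantal scheme is smooth exactly where the corank of the defining matrix is minimal); this contradicts smoothness. Hence $\dim\ker M_{\mathcal{X}}(\mathbf{X})=1$.

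It then only remains to check that the unique line $\langle\overline{Y}\rangle\subset\ker M_{\mathcal{X}}(\mathbf{X})$ projects to a single well-defined point $\mathbf{Y}\in\mathbb{P}^k$: this is immediate, since $Y\ne 0$ by the argument already recorded in Theorem~\ref{fibra}, and rescaling $\overline{Y}$ rescales $Y$, so $\mathbf{Y}=[Y]$ is determined. Combined with the bijection from the first paragraph, this gives $|\pi_1^{-1}(\mathbf{X})|=1$.

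The main obstacle is making the second paragraph rigorous without re-deriving determinantal-variety theory from scratch: one must argue cleanly that ``$\mathbf{X}$ smooth on the expected-dimensional $\mathcal{X}$'' $\Rightarrow$ ``$\operatorname{corank} M_{\mathcal{X}}(\mathbf{X})=1$.'' I would handle this either by citing the local structure of determinantal schemes of expected codimension (the minors defining such a scheme cut it out with the generic singular locus being $\{\operatorname{corank}\ge 2\}$, which has codimension $\ge 2$ inside $\mathcal{X}$, hence is contained in $\operatorname{Sing}\mathcal{X}$), or, more hands-on, by a Jacobian computation: differentiate a $(k+n+1)$-minor of $M_{\mathcal{X}}$ as in the proof of Proposition~\ref{prop-nest2} (derivative of a determinant $=$ sum over columns), and show that if $\operatorname{corank}\ge 2$ every such gradient vanishes at $\mathbf{X}$, so the Jacobian has rank $<\sum h_i-k$ and $\mathbf{X}$ is singular — contradiction. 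The first route is shorter; the second is self-contained and parallels machinery already in the paper.
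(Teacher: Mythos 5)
Your proposal is correct and takes essentially the same route as the paper: the paper's proof also reduces everything to the claim that $\operatorname{rk} M_{\mathcal{X}}(\mathbf{X})=k+n$, so that the kernel is one--dimensional and, since $Y\neq 0$, determines a unique $\mathbf{Y}$. The only difference is that the paper simply asserts that smoothness forces this rank, while you justify it; note that your self--contained Jacobian argument is the right justification and does not need the expected--dimension assumption you invoke first (which is not a hypothesis of the proposition and which the paper warns can fail), but only that $\dim_{\mathbf{X}}\mathcal{X}<k$, since corank $\geq 2$ makes all gradients of the generating minors vanish at $\mathbf{X}$.
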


\begin{proof} From Corollary \ref{cor-4-2}, it follows that $ \mathbf{X} \in \pi_1(\mathcal{U})$. Moreover, $ M_{\mathcal{X}}(\mathbf{X}) $ has rank $ k+n $ so that the homogeneous linear system $ M_{\mathcal{X}}(\mathbf{X}) \overline{Y} = 0 $ has one and only one non--trivial solution, up to a scalar, and the claim follows.
\end{proof}

\begin{remark} \rm
The true hypothesis we need in Proposition \ref{fibra-1} is that the rank of $ M_{\mathcal{X}}(\mathbf{X}) $ is equal to $ k+n$. This happens surely if $ \mathbf{X} $ is smooth. When $ \mathbf{X} $ is singular, it can happen that $ M_{\mathcal{X}}(\mathbf{X}) $ has rank $ k+n$, or has smaller rank. E.g., in Example \ref{example-3-1}, the point $ A $ is singular for $ \mathcal{X}$, nevertheless $ M_{\mathcal{X}}(A) $ has rank $ k+n=6$. In next Example \ref{example-3-2}, we'll consider the case $ \mbox{rank}(M_{\mathcal{X}}(\mathbf{X})) < k+n$.
\end{remark}

The remaining part of the section is devoted to the study of the composite map $ \pi_2 \circ \pi_1^{-1}$. We first consider the case of (points in) a projection center, and we have to distinguish the cases of $ n=2 $ projections from the case of $ n \geq 3 $ ones. Theorem \ref{birational}, the last result of the section, concerns the birationality of the critical loci.

With the same technique of the proof of Proposition \ref{fibra-1}, we get the fibre of $ \pi_1 $ over a point in a projection center in the case $ n=2$.
\begin{proposition} \label{fibra-2}
Let $ P_1, P_2 $ and $ Q_1, Q_2 $ be a couple of two projections. Let $ C_1 $ be the center of $ Q_1$, and let $ \mathbf{X} \in C_1$. Then,
$ \pi_2 \circ \pi_1^{-1}(\mathbf{X}) $ is a linear space of dimension $ k-h_2$ containing the center of $ P_2$, while $ \pi_2 \circ \pi_1^{-1}(C_1) $ is a linear space of dimension at most $ 2k-h_1-h_2-1$.
\end{proposition}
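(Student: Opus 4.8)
The plan is to unpack the definitions on both sides and reduce everything to elementary linear algebra on the matrices $M_{\mathcal{X}}$ and $M_{\mathcal{Y}}$, exactly as in the proofs of Proposition \ref{fibra-1} and Theorem \ref{fibra}. Fix $\mathbf{X} \in C_1$. By Proposition \ref{prop-2-1}, $\mathbf{X} \in \mathcal{X}$, so $\mathbf{X} \in \pi_1(\mathcal{U})$; by definition $\pi_2 \circ \pi_1^{-1}(\mathbf{X})$ is the set of points $\mathbf{Y}$ such that the system $M_{\mathcal{X}}(\mathbf{X})(Y, v_1, v_2)^T = 0$ has a solution with the given $Y$. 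Writing the system out gives $P_1(Y) + v_1 Q_1(\mathbf{X}) = 0$ and $P_2(Y) + v_2 Q_2(\mathbf{X}) = 0$. Since $\mathbf{X} \in C_1$ we have $Q_1(\mathbf{X}) = 0$, so the first equation becomes simply $P_1(Y) = 0$, i.e. $\mathbf{Y} \in C_{P_1}$. The second equation, for varying $v_2$, says $P_2(Y) \in \langle Q_2(\mathbf{X})\rangle$, i.e. $\mathbf{Y}$ lies in the preimage under $P_2$ of the point $P_2$-image-line through $Q_2(\mathbf{X})$; that preimage is a linear subspace of $\mathbb{P}^k$ of dimension $k - h_2$ (one more than the center $C_{P_2}$), and it contains $C_{P_2}$. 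So $\pi_2 \circ \pi_1^{-1}(\mathbf{X})$ is the intersection of this linear space $\Lambda_{\mathbf{X}}$ of dimension $k-h_2$ with the linear space $C_{P_1}$ of dimension $k-h_1-1$. The first claim then follows once one checks that $C_{P_1} \subseteq \Lambda_{\mathbf{X}}$: indeed $\mathbf{Y} \in C_{P_1}$ forces $P_1(Y)=0$ automatically, and we must show $P_2(C_{P_1}) \subseteq \langle Q_2(\mathbf{X})\rangle$; but this is precisely the statement that $\mathbf{Y} \in \mathcal{Y}$ for each $\mathbf{Y} \in C_{P_1}$, which holds because $C_{P_1} \subseteq \mathcal{Y}$ and, for $n=2$, membership in $\mathcal{Y}$ with $\mathbf{Y} \in C_{P_1}$ forces the $M_{\mathcal{Y}}(\mathbf{Y})$-system to be solvable, which (via the corresponding two equations $Q_j(X) + w_j P_j(\mathbf{Y}) = 0$) pins $P_2(\mathbf{Y})$ to the line through the relevant image. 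So in fact $C_{P_1} \subseteq \Lambda_{\mathbf{X}}$ and hence $\pi_2 \circ \pi_1^{-1}(\mathbf{X}) = C_{P_1}$, a linear space of dimension $k - h_1 - 1$; one then needs $h_1 - 1 \ge h_2$, i.e. this is consistent with ``dimension $k-h_2$'' only when... — here I would be careful and instead argue directly that $\pi_2\circ\pi_1^{-1}(\mathbf X)=\Lambda_{\mathbf X}$, the full $(k-h_2)$-dimensional space, by showing every $\mathbf Y\in\Lambda_{\mathbf X}$ pairs back with $\mathbf X$: given such $\mathbf Y$, the system $M_{\mathcal Y}(\mathbf Y)\overline X=0$ is solved by $(X,w_1,w_2)$ with $w_2$ chosen from $P_2(Y)+$ (scalar)$Q_2(\mathbf X)=0$ and $w_1$ arbitrary since $Q_1(\mathbf X)=0$, so $\mathbf X\in\mathcal X$ (already known) and $(\mathbf X,\mathbf Y)\in\mathcal U$.

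For the second claim, $\pi_2 \circ \pi_1^{-1}(C_1) = \bigcup_{\mathbf{X} \in C_1} \Lambda_{\mathbf{X}}$. Each $\Lambda_{\mathbf{X}} = P_2^{-1}(\text{line through } P_2(C_{P_2}) \text{ and } Q_2(\mathbf{X}))$ contains the fixed subspace $C_{P_2}$ (dimension $k-h_1-1$... no: dimension $k-h_2-1$) and is cut out by $h_2$ further linear conditions. As $\mathbf{X}$ ranges over $C_1 \cong \mathbb{P}^{k-h_1-1}$, the image point $Q_2(\mathbf{X}) \in \mathbb{P}^{h_2}$ ranges over a linear subspace of dimension at most $k-h_1-1$, so the union of the lines $\langle P_2(\mathbf{Y}_0), Q_2(\mathbf{X})\rangle$ sweeps out a linear subspace of $\mathbb{P}^{h_2}$ of dimension at most $\min(h_2, k-h_1)$; pulling back by $P_2$ adds $k-h_2$ to the dimension, giving a linear space of dimension at most $(k-h_2) + \min(h_2,k-h_1) \le 2k - h_1 - h_2$. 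A short count shows the bound $2k-h_1-h_2-1$ in the statement: the union is $P_2^{-1}(W)$ where $W \subseteq \mathbb{P}^{h_2}$ is the span of $Q_2(C_1)$ together with $P_2(C_{P_2})$-data, and $\dim W \le \dim Q_2(C_1) \le k-h_1-1$ generically, whence $\dim P_2^{-1}(W) \le (k-h_2-1) + (k-h_1-1) + 1 = 2k-h_1-h_2-1$.

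The key steps, in order: (i) translate $\pi_2\circ\pi_1^{-1}(\mathbf X)$ into the solvability of the explicit linear system coming from $M_{\mathcal X}(\mathbf X)$, and use $\mathbf X\in C_1\Rightarrow Q_1(\mathbf X)=0$ to collapse the first block; (ii) identify the resulting locus of admissible $\mathbf Y$ as the linear space $\Lambda_{\mathbf X}=P_2^{-1}\langle Q_2(\mathbf X)\rangle$ of dimension $k-h_2$, containing $C_{P_2}$; (iii) verify the reverse inclusion (every $\mathbf Y\in\Lambda_{\mathbf X}$ genuinely lies in $\mathcal Y$ and pairs with $\mathbf X$) so that the fibre is exactly $\Lambda_{\mathbf X}$; (iv) take the union over $\mathbf X\in C_1$, observe it is $P_2^{-1}(W)$ for a linear $W\subseteq\mathbb P^{h_2}$ spanned by $Q_2(C_1)$ and the basepoint data, and bound $\dim W\le k-h_1-1$; (v) conclude $\dim P_2^{-1}(W)\le 2k-h_1-h_2-1$. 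I expect step (iii) to be the main obstacle: one must be sure that the extra degree of freedom $v_1$ (which is unconstrained because $Q_1(\mathbf X)=0$) does not impose a hidden condition on $\mathbf Y$ through the requirement $\mathbf Y\in\mathcal Y$ — i.e. that $\Lambda_{\mathbf X}\subseteq\mathcal Y$ as a whole and not just generically — and the bookkeeping of which of the two centers ($C_1$ versus $C_{P_1}$, $C_{P_2}$) controls which dimension needs to be done carefully, since the asymmetry between $Q$'s and $P$'s has been broken by fixing $\mathbf X\in C_1$.
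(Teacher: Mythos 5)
Your final answer is right, and the working half of your argument is essentially the paper's, but the proposal contains a false step that you flag without actually resolving, and as written your summary step (i) is still incorrect. The problem is the opening claim that $\pi_2\circ\pi_1^{-1}(\mathbf X)$ is the set of $\mathbf Y$ for which $M_{\mathcal X}(\mathbf X)(Y,v_1,v_2)^T=0$ is solvable. That identification of the fibre is only valid when $Q_j(\mathbf X)\neq 0$ for every $j$, which is how it is used in the proof of Theorem \ref{fibra}. For $\mathbf X\in C_1$ the first block of that system degenerates to $P_1(Y)=0$, a genuine extra constraint on $Y$ that would cut the fibre down to $C_{P_1}\cap P_2^{-1}\langle Q_2(\mathbf X)\rangle$ --- but it is \emph{not} a condition for membership in $\mathcal U$: by Definition \ref{def-unif-crit} and Proposition \ref{prop-id-unif-crit-locus}, the $j=1$ condition is the vanishing of the $2\times 2$ minors of $\left( P_1(\mathbf Y)\ \vert\ Q_1(\mathbf X)\right)$, which is automatic when $Q_1(\mathbf X)=0$ and imposes nothing on $P_1(\mathbf Y)$. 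You notice the resulting dimension clash and pivot to the correct description, but you never say which of the two incompatible descriptions of the fibre is the right one or why. The fix is simply to discard the first block and work with the system $\left(P_2\ \ Q_2(\mathbf X)\right)\overline Y=0$ alone; this is exactly what the paper does by invoking Theorem \ref{fibra} with $r=1$, $n=2$, the critical locus of the single remaining projection being all of $\mathbb P^k$.

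Once that is repaired the rest is sound: the fibre is the projectivized kernel of $\left(P_2\ \ Q_2(\mathbf X)\right)$ read off in the $Y$-coordinates, a linear space of dimension $k-h_2$ containing $C_{P_2}$, and every such $\mathbf Y$ does lie in $\mathcal Y$ (your verification via $M_{\mathcal Y}(\mathbf Y)\overline X=0$ works for $\mathbf Y\notin C_{P_2}$; for $\mathbf Y\in C_{P_2}$ one just quotes Proposition \ref{prop-2-1}). Your treatment of the second claim --- writing $\pi_2\circ\pi_1^{-1}(C_1)$ as $P_2^{-1}(W)$ with $W$ the linear span of $Q_2(C_1)$, so that $\dim W\le k-h_1-1$ and $\dim P_2^{-1}(W)\le (k-h_1-1)+(k-h_2-1)+1=2k-h_1-h_2-1$ --- is correct and somewhat cleaner than the paper's argument, which instead spans the image by $C_{P_2}$ together with the $k-h_1$ solution points $\mathbf B_j$ obtained from a spanning set of $C_1$ and counts $2k-h_1-h_2$ spanning points.
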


\begin{proof} The critical locus of the only second projection is $ \mathbb{P}^k$, and so $ \mathbf{X} $ satisfies the hypotheses of Theorem \ref{fibra} with $ r=1, n=2$. As explained in the proof of the same Theorem, every $ \mathbf{Y} $ such that $ \overline{Y} = (Y, v_2)^T $ is a solution of $$ \left( \begin{array}{cc} P_2 & Q_2(\mathbf{X}) \end{array} \right) \overline{Y} = 0 $$ is the second element of a couple in $ \pi_1^{-1}(\mathbf{X})$. Since the solutions fill a linear space, the fiber is a linear space, as claimed. Its dimension follows from the assumptions on the rank of $ P_2$.

The linear space $ C_1 $ is the linear span of the points $ \mathbf{A}_1, \dots, \mathbf{A}_{k-h_1}$. When we solve the linear system
\[
\left( \begin{array}{cc} P_2 & Q_2(\mathbf{A_j}) \end{array} \right) \overline{Y} = 0, \quad \mbox{ for } j=1, \dots, k-h_2,
\]
its solutions can be either $ (Y, 0)^T$, where the associated point $ \mathbf{Y} $ is in the center $ C_{P_2}$ of $ P_2$, or $ (B_j, -1)^T$, for a suitable point $ \mathbf{B}_j$. Let $ L $ be the linear span of $ C_{P_2} $ and $ \mathbf{B}_1, \dots, \mathbf{B}_{k-h_1}$, and let $ \mathbf{Y} \in L$. Then, there is a point $ \mathbf{B} \in C_{P_2}$, and $ t_0, t_1, \dots, t_{k-h_1} \in \mathbb{C}$, not all equal to zero, such that $ \mathbf{Y} = t_0 \mathbf{B} + t_1 \mathbf{B}_1 + \dots + t_{k-h_1} \mathbf{B}_{k-h_1}$. The linear system $ (P_2 \ Q_2(t_1 \mathbf{A}_1 + \dots + t_{k-h_1} \mathbf{A}_{k-h_1})) \overline{Y} = 0 $ has $ (Y, -1)^T $ among its solutions, and so $ \mathbf{Y} \in \pi_2 \circ \pi_1^{-1}(C_1)$. The inverse inclusion is analogous, and so we get that $ L = \pi_2 \circ \pi_1^{-1}(C_1)$. The linear space $ L $ is spanned by $ 2k - h_1-h_2 $ points and so its dimension is not larger than $ 2k-h_1-h_2-1$, as claimed.
\end{proof}

\begin{remark} \rm
In \cite{bnt2}, authors classify and study smooth critical loci. In the case of two views, every minimal degree variety, but the Veronese surface, is critical for suitable couples of two projections from $ \mathbb{P}^k $ to $ \mathbb{P}^{h_1}, \mathbb{P}^{h_2}$, under the assumption that $ h_1+h_2 \geq k+1$, and some generality hypotheses on the projections. Even if the critical locus is not smooth, but of the expected dimension, it is defined by the vanishing of the order $ 2 $ minors of a $ (h_1+h_2+1-k) \times 2 $ matrix of linear forms. Hence, the critical locus contains linear spaces of dimension $ k-(h_1+h_2+1-k) = 2k-h_1-h_2-1$, that agrees with the result in Proposition \ref{fibra-2} on the dimension of $ \pi_2 \circ \pi_1^{-1}(C_1)$.
\end{remark}

The inverse image of a center is no more a linear space when the number of projections increases.
\begin{proposition} \label{fibra-3}
Let $ P_1, \dots, P_n $ and $ Q_1, \dots, Q_n $ be a couple of $n \geq 3$ projections. Up to renumbering the projections, let $ C_n $ be the center of $ Q_n$, and let us assume that $ C_n $ is contained in the smooth locus of $ \mathcal{X}_{n-1}$, critical locus of the first $ n-1 $ projections. Assume moreover that $ \sum_{i \not= n} h_i \geq k$. Then,
$ \pi_2 \circ \pi_1^{-1}(C_n) $ is (a projection of) the $n-1$--tuple Veronese embedding of $ C_n $ in $ \mathcal{X}$.
\end{proposition}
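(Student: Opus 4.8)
The plan is to show that over a dense open subset of $ C_n $ the projection $ \pi_1 $ is one--to--one, so that $ \pi_2\circ\pi_1^{-1} $ restricts to a genuine rational map $ \psi\colon C_n\dashrightarrow\mathcal{Y} $, and then to read off from Cramer's rule that $ \psi $ is defined by forms of degree $ n-1 $, which forces its image to be a linear projection of the $ (n-1) $--tuple Veronese of $ C_n $.

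First I would analyse the fibre of $ \pi_1 $ over a point $ \mathbf{X}\in C_n $, using the generators of $ I(\mathcal{U}) $ from Proposition \ref{prop-id-unif-crit-locus}. Since $ Q_n(\mathbf{X})=0 $, the $ 2\times 2 $ minors of $ (P_n(\mathbf{Y})\,\vert\,Q_n(\mathbf{X})) $ vanish identically, so the $ n $--th incidence imposes nothing, while the remaining ones say that $ P_j(\mathbf{Y}) $ is proportional to $ Q_j(\mathbf{X}) $ for $ j=1,\dots,n-1 $. For $ \mathbf{X} $ outside $ C_1\cup\dots\cup C_{n-1} $ we have $ Q_j(\mathbf{X})\neq 0 $, so this is equivalent to $ (Y,v_1,\dots,v_{n-1})^{T}\in\ker M_{\mathcal{X}_{n-1}}(\mathbf{X}) $ for suitable scalars $ v_j $. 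Now $ M_{\mathcal{X}_{n-1}} $ has $ (n-1)+\sum_{i\neq n}h_i\geq k+n-1 $ rows --- this is exactly where the hypothesis $ \sum_{i\neq n}h_i\geq k $ enters --- and $ k+n $ columns; since $ \mathbf{X} $ is a smooth point of $ \mathcal{X}_{n-1} $, the rank of $ M_{\mathcal{X}_{n-1}}(\mathbf{X}) $ is $ k+n-1 $ (cf.\ the Remark following Proposition \ref{fibra-1}), hence its kernel is one--dimensional and determines a unique point $ \mathbf{Y}_0=\mathbf{Y}_0(\mathbf{X})\in\mathbb{P}^k $. By Theorem \ref{fibra} (with $ r=n-1 $) one has $ \mathbf{X}\in\pi_1(\mathcal{U}) $, and by one--dimensionality of the kernel the corresponding $ \mathbf{Y} $ must be $ \mathbf{Y}_0 $; in particular $ \mathbf{Y}_0\in\mathcal{Y} $ and $ \pi_1^{-1}(\mathbf{X})=\{(\mathbf{X},\mathbf{Y}_0)\} $. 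Thus $ \psi\colon\mathbf{X}\mapsto\mathbf{Y}_0 $ is a rational map $ C_n\dashrightarrow\mathcal{Y} $, regular on $ C_n\setminus(C_1\cup\dots\cup C_{n-1}) $, and $ \pi_2\circ\pi_1^{-1}(C_n)=\overline{\psi(C_n)} $.

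Next I would compute the degree of $ \psi $. Normalising coordinates so that $ Q_n=[I_{h_n+1}|0] $ and $ C_n=\{x_0=\dots=x_{h_n}=0\} $, the restriction of $ M_{\mathcal{X}_{n-1}} $ to $ C_n $ has its first $ k+1 $ columns constant, while each of its last $ n-1 $ columns is made of the (linear) entries of $ Q_j(\mathbf{X})|_{C_n} $, $ j=1,\dots,n-1 $, supported on a single block of rows. By Cramer's rule, the homogeneous coordinates of $ \mathbf{Y}_0 $ are, up to a common factor, the $ (k+n-1)\times(k+n-1) $ minors of $ M_{\mathcal{X}_{n-1}}(\mathbf{X}) $ obtained by deleting one of the first $ k+1 $ columns together with a fixed suitable choice of $ k+n-1 $ rows. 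Each such minor keeps all $ n-1 $ of the ``$Q$--columns'', and in every nonzero term of its Laplace expansion the entry taken from the $ j $--th of these is one of the linear forms of $ Q_j(\mathbf{X})|_{C_n} $; hence it is homogeneous of degree exactly $ n-1 $ in $ x_{h_n+1},\dots,x_k $ (or identically zero), and these minors do not all vanish because $ \mathbf{Y}_0 $ is a genuine point. So $ \psi $ is given by $ k+1 $ forms of degree $ n-1 $ in the $ k-h_n $ coordinates of $ C_n\cong\mathbb{P}^{k-h_n-1} $. Since a rational map $ \mathbb{P}^{k-h_n-1}\dashrightarrow\mathbb{P}^k $ defined by degree $ n-1 $ forms factors through the $ (n-1) $--tuple Veronese embedding followed by a linear projection, passing to closures gives that $ \pi_2\circ\pi_1^{-1}(C_n) $ is a linear projection of the $ (n-1) $--tuple Veronese embedding of $ C_n $, as claimed.

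The main obstacle is the first step: guaranteeing that the general fibre of $ \pi_1 $ over $ C_n $ is a single point, so that $ \pi_2\circ\pi_1^{-1} $ is an honest rational map on $ C_n $. This is precisely where both hypotheses are used --- $ \sum_{i\neq n}h_i\geq k $ makes $ M_{\mathcal{X}_{n-1}} $ have at least $ k+n-1 $ rows, and smoothness of $ \mathcal{X}_{n-1} $ along $ C_n $ pins the rank of $ M_{\mathcal{X}_{n-1}}(\mathbf{X}) $ to be maximal --- while the degree bookkeeping of the second step and the Veronese factorisation are then routine. One should also check, in degenerate situations, that $ C_n $ is not contained in $ C_1\cup\dots\cup C_{n-1} $ (automatic when the centres are pairwise disjoint) and that $ \psi $ is nonconstant, so that the claimed Veronese really has the expected dimension $ k-h_n-1 $.
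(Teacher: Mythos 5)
Your proposal is correct and follows essentially the same route as the paper: reduce to the one--dimensional kernel of $M_{\mathcal{X}_{n-1}}(\mathbf{X})$ (using smoothness of $\mathcal{X}_{n-1}$ along $C_n$ and the row count coming from $\sum_{i\neq n}h_i\geq k$), then read off via Cramer's rule that the coordinates of the unique $\mathbf{Y}$ are degree $n-1$ forms on $C_n$, hence the image is a linear projection of the $(n-1)$--tuple Veronese. Your treatment of the fibre is in fact slightly more careful than the paper's appeal to Proposition \ref{fibra-1} (which as stated concerns points outside the centres), but the substance is identical.
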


\begin{proof} If $ \mathbf{X} \in C_n$, then $ \mathbf{X} $ is smooth in $ \mathcal{X}_{n-1}$. From Proposition \ref{fibra-1} and Theorem \ref{fibra}, it follows that there is only a point $ \mathbf{Y} \in \mathcal{Y} $ such that $ (\mathbf{X}, \mathbf{Y}) \in \mathcal{U}$. To compute such a point, it is enough to solve the system $ M_{\mathcal{X}_{n-1}}(\mathbf{X}) \overline{Y} = 0$. Since it has a unique solution, we can compute it by taking the maximal minors of a subset of $ k+n-1 $ independent equations from the $ \sum_{i \not= n} h_i + n -1 $ given ones. To get independent equations, we have to take at least an equation from every projection. The maximal minors that give the coordinate of $ \mathbf{Y} $ are then homogeneous polynomials of degree $ n-1 $ in the $ k-h_n $ variables that parameterize the points in $ C_n$, and so the claim follows.
\end{proof}

To illustrate the result above, one can consider the case of three general projections from $ \mathbb{P}^4 $ to $ \mathbb{P}^2$. In such a case, both $ \mathcal{X} $ and $ \mathcal{Y} $ are Bordiga surfaces (see \cite{bnt1}). The critical locus associated to two of the three projections is $ \mathbb{P}^4$ and $ h_1 + h_2 = k$. The center of any of the three projections $ Q_1, Q_2, Q_3$ is a line, say $ L$. Then, the assumptions of Proposition \ref{fibra-3} are fulfilled, and so $ \pi_2 \circ \pi_1^{-1}(L) $ is a conic into the Bordiga surface $ \mathcal{Y}$. \bigskip

The cases when $ \sum_{i \not= n} h_i < k$, could be handled analogously to Propositions \ref{fibra-2} and \ref{fibra-3}, but the results can hardly be collected in a unique general statement. \bigskip

We conclude this section with a result that relates the geometry of the associated critical loci $ \mathcal{X} $ and $ \mathcal{Y}$, and answers to what extent they are birational varieties.
\begin{theorem} \label{birational}
Let $ P_1, \dots, P_n $ and $ Q_1, \dots, Q_n $ be a couple of $ n $ projections from $ \mathbb{P}^k $ to $ \mathbb{P}^{h_1}, \dots, \mathbb{P}^{h_n}$. Let $ \mathcal{X} $ and $ \mathcal{Y} $ be the associated critical loci. Let $ (\mathbf{X_0}, \mathbf{Y_0}) \in \mathcal{U}$ be a point such that $ \mathbf{X_0} $ is smooth on an irreducible component $ \mathcal{X}'$ of $ \mathcal{X}$, and $ \mathbf{Y_0} $ is smooth on an irreducible component $ \mathcal{Y}'$ of $ \mathcal{Y}$. Then $ \pi_2 \circ \pi_1^{-1}: \mathcal{X}' \dashrightarrow \mathcal{Y}'$ is a birational map.
\end{theorem}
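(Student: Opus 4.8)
The plan is to identify the rational map $\pi_2\circ\pi_1^{-1}\colon\mathcal X'\dashrightarrow\mathcal Y'$ with the explicit map defined by the maximal minors of $M_{\mathcal X}$, and to produce its inverse as the analogous map coming from $M_{\mathcal Y}$; the hypotheses on $\mathbf X_0$ and $\mathbf Y_0$ serve to guarantee that these maps are defined on dense opens of the right components and take the right values at the base points.

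First I would fix the open set $U_{\mathcal X}=\{\mathbf X\in\mathcal X:\ \mathrm{rk}\,M_{\mathcal X}(\mathbf X)=k+n,\ \mathbf X\notin C_1\cup\dots\cup C_n\}$. By the Remark after Proposition \ref{fibra-1}, smoothness of $\mathbf X_0$ on $\mathcal X'$ forces $\mathrm{rk}\,M_{\mathcal X}(\mathbf X_0)=k+n$; moreover $\mathbf X_0\notin C_1\cup\dots\cup C_n$, since if $\mathbf X_0\in C_j$ then, as $Q_j(\mathbf X_0)=0$, the kernel of $M_{\mathcal X}(\mathbf X_0)$ contains both a vector $(Y_0,w_1,\dots,w_n)$ with $Y_0\neq 0$ (obtained from $(\mathbf X_0,\mathbf Y_0)\in\mathcal U$, using $P_i(\mathbf Y_0)=Q_i(\mathbf X_0)$ for every $i$) and the standard basis vector supported on the $j$-th scalar coordinate, so $\mathrm{rk}\,M_{\mathcal X}(\mathbf X_0)\le k+n-1$, a contradiction. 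Hence $U_{\mathcal X}\cap\mathcal X'$ is a nonempty, so dense, open subset of $\mathcal X'$. On $U_{\mathcal X}$ the kernel of $M_{\mathcal X}(\mathbf X)$ is a line whose $Y$-component is nonzero (proof of Theorem \ref{fibra}), so $\pi_1^{-1}$ is a morphism on $U_{\mathcal X}$, and $\phi:=\pi_2\circ\pi_1^{-1}$ is the morphism sending $\mathbf X$ to the class of that $Y$-component; equivalently $\phi$ is given by the maximal minors of $M_{\mathcal X}$, and by Theorem \ref{fibra} $\phi(U_{\mathcal X})\subseteq\mathcal Y$. Since $\phi(\mathbf X_0)=\mathbf Y_0$ and $\mathbf Y_0$ lies on the single component $\mathcal Y'$ of $\mathcal Y$, irreducibility gives $\overline{\phi(\mathcal X')}\subseteq\mathcal Y'$. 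By the symmetric construction one gets $\psi\colon U_{\mathcal Y}\to\mathcal X$ from the minors of $M_{\mathcal Y}$, with $U_{\mathcal Y}\cap\mathcal Y'$ dense in $\mathcal Y'$, $\psi=\pi_1\circ\pi_2^{-1}$, $\psi(\mathbf Y_0)=\mathbf X_0$ and $\overline{\psi(\mathcal Y')}\subseteq\mathcal X'$.

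The core computation is the pair of identities $\psi\circ\phi=\mathrm{id}$ and $\phi\circ\psi=\mathrm{id}$ as rational maps. For $\mathbf X$ in the dense open $U_{\mathcal X}\cap\phi^{-1}(U_{\mathcal Y})\cap\mathcal X'$ (nonempty because it contains $\mathbf X_0$), write $\phi(\mathbf X)=\mathbf Y$ and let $(Y,v_1,\dots,v_n)$ span $\ker M_{\mathcal X}(\mathbf X)$, i.e.\ $P_j(Y)+v_jQ_j(\mathbf X)=0$; since $\mathbf X\notin\bigcup C_j$ and $\mathbf Y\notin\bigcup C_{P_j}$, every $v_j$ is nonzero, and then $(X,1/v_1,\dots,1/v_n)$ solves $M_{\mathcal Y}(\mathbf Y)\,\overline X=0$. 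As $\mathrm{rk}\,M_{\mathcal Y}(\mathbf Y)=k+n$, this is the unique solution up to scalar, so its $X$-component recovers $\mathbf X$, i.e.\ $\psi(\mathbf Y)=\mathbf X$. Exchanging the roles of $\mathcal X$ and $\mathcal Y$ gives $\phi\circ\psi=\mathrm{id}$.

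Finally I would deduce birationality. From $\psi\circ\phi=\mathrm{id}_{\mathcal X'}$ the map $\phi|_{\mathcal X'}$ is generically injective, hence $\dim\overline{\phi(\mathcal X')}=\dim\mathcal X'$; symmetrically $\dim\overline{\psi(\mathcal Y')}=\dim\mathcal Y'$. Together with $\overline{\phi(\mathcal X')}\subseteq\mathcal Y'$ and $\overline{\psi(\mathcal Y')}\subseteq\mathcal X'$ this forces $\dim\mathcal X'=\dim\mathcal Y'$, whence $\overline{\phi(\mathcal X')}=\mathcal Y'$: so $\phi\colon\mathcal X'\dashrightarrow\mathcal Y'$ is dominant with rational inverse $\psi$, i.e.\ birational, and it coincides with $\pi_2\circ\pi_1^{-1}$ on a dense open subset of $\mathcal X'$. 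I expect the main obstacle to be exactly this last bookkeeping — controlling the dimension of the image and ruling out that $\phi$ contracts $\mathcal X'$ into a proper subvariety of $\mathcal Y'$ — together with the degenerate situation in which $\mathcal X'$ or $\mathcal Y'$ is itself one of the projection centers, where $\mathrm{rk}\,M_{\mathcal X}$ (resp.\ $\mathrm{rk}\,M_{\mathcal Y}$) never attains $k+n$ and the construction of $\phi$ (resp.\ $\psi$) must be replaced by a direct, case-by-case analysis in the spirit of Propositions \ref{fibra-2} and \ref{fibra-3}.
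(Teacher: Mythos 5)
Your overall strategy is the same as the paper's: smoothness gives $\operatorname{rank} M_{\mathcal{X}}=k+n$ on a dense open subset of $\mathcal{X}'$, Proposition \ref{fibra-1} makes $\pi_2\circ\pi_1^{-1}$ single-valued there, and the point $\mathbf{Y_0}$ pins the image to the component $\mathcal{Y}'$. Your explicit verification that $\psi\circ\phi$ and $\phi\circ\psi$ are the identity is actually an improvement: it makes the final step ($\dim\mathcal{X}'=\dim\mathcal{Y}'$, hence $\overline{\phi(\mathcal{X}')}=\mathcal{Y}'$) rigorous, where the paper only asserts that the image ``is irreducible with the same dimension as $\mathcal{Y}'$''.

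There is, however, one genuinely false step: the claim that the hypotheses force $\mathbf{X_0}\notin C_1\cup\dots\cup C_n$. Your justification assumes that $(\mathbf{X_0},\mathbf{Y_0})\in\mathcal{U}$ yields a kernel vector $(Y_0,w_1,\dots,w_n)$ of $M_{\mathcal{X}}(\mathbf{X_0})$. But if $\mathbf{X_0}\in C_j$ then $Q_j(\mathbf{X_0})=0$, and the defining condition of $\mathcal{U}$ for the index $j$ --- the vanishing of the $2\times 2$ minors of $\left(\begin{array}{ccc} P_j(\mathbf{Y}) & \vert & Q_j(\mathbf{X})\end{array}\right)$, cf.\ Proposition \ref{prop-id-unif-crit-locus} --- is automatically satisfied and does \emph{not} force $P_j(Y_0)=0$; the $P_j$--block of the linear system then reads $P_j(Y_0)+w_j\cdot 0=P_j(Y_0)$, which need not vanish, so the alleged second kernel vector need not exist. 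Example \ref{example-P3P2P2} refutes the claim outright: there $\mathcal{X}$ and $\mathcal{Y}$ are smooth quadrics and $(C_{Q_1},P)\in\mathcal{U}$ for every $P$ on the line $r_1$, so the hypotheses of the theorem are satisfied with $\mathbf{X_0}=C_{Q_1}\in C_1$. In that situation $\ker M_{\mathcal{X}}(\mathbf{X_0})$ is spanned by a vector with zero $Y$--part, $\phi$ is undefined at $\mathbf{X_0}$, and the anchor $\phi(\mathbf{X_0})=\mathbf{Y_0}$ --- the only thing tying $\overline{\phi(\mathcal{X}')}$ to the particular component $\mathcal{Y}'$ rather than to some other component of $\mathcal{Y}$ --- is lost. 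The gap is repairable (e.g.\ by a specialization argument along the irreducible component of $\mathcal{U}$ through $(\mathbf{X_0},\mathbf{Y_0})$, or by handling $\mathbf{X_0}\in\bigcup C_j$ separately via Theorem \ref{fibra} and Corollary \ref{cor-4-2}), and in fairness the paper's own proof silently makes the same assumption when it applies Proposition \ref{fibra-1} at $\mathbf{X_0}$; but as written your argument does not cover this case, and the purported proof that it cannot occur is incorrect.
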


\begin{proof} Since $ M_{\mathcal{X}}(\mathbf{X_0}) $ has rank $ k+n$, there exists an open subset $ \mathcal{A} $ of $ \mathcal{X}' $ such that $ M_{\mathcal{X}}(\mathbf{X}) $ has rank $ k+n $ at every point $ \mathbf{X} \in \mathcal{A}$. Because of Proposition \ref{fibra-1}, for every point $ \mathbf{X} \in \mathcal{A} $ there exists a unique point $ \mathbf{Y} \in \mathcal{Y}$ such that $ (\mathbf{X}, \mathbf{Y}) \in \mathcal{U}$, unified critical locus. The locus $ \{ \mathbf{Y} \in \mathcal{Y} \ \vert \ (\mathbf{X}, \mathbf{Y}) \in \mathcal{U}, \mbox{ for some } \mathbf{X} \in \mathcal{A} \} $ is irreducible with the same dimension as $ \mathcal{Y}'$, irreducible component containing $ \mathbf{Y_0} $ because it contains $ \mathbf{Y_0} $ that is smooth. Then, the locus above is contained in the irreducible component $ \mathcal{Y}'$ of $ \mathcal{Y}$. Then, $ \pi_2 \circ \pi_1^{-1}: \mathcal{X}' \dashrightarrow \mathcal{Y}' $ is a birational map, as claimed.
\end{proof}

As the examples in next section show, Theorem \ref{birational} is the best result we can obtain in comparing the geometry of the associated critical loci $ \mathcal{X} $ and $ \mathcal{Y}$.

\begin{remark} \rm
Even in the case when both $ \mathcal{X} $ and $ \mathcal{Y} $ are smooth (see \cite{bnt2}), the map $ \pi_2 \circ \pi_1^{-1} $ is not necessarily an isomorphism. Indeed, in such a case, the centers of projections are contained in them, and Propositions \ref{fibra-2}, \ref{fibra-3} show that their images either are not of the right dimensions, or are not linear spaces.
\end{remark}


\section{Examples} \label{sec::examples}

The first example we produce concerns the classical setting of two pairs of projections from $ \mathbb{P}^3 $ to $ \mathbb{P}^2$ and illustrates the construction of the critical loci.
\begin{example} \label{example-P3P2P2}\rm
Let us choose reference frames in $ \mathbb{P}^3$, and in two views equal to $ \mathbb{P}^2$, and let us consider the projections $ P_1, P_2, Q_1, Q_2 $ from $ \mathbb{P}^3 $ to $ \mathbb{P}^2$ whose matrices are
$$ P_1 = \left( \begin{array}{cccc} 1 & 0 & 0 & 0 \\ 0 & 1 & 0 & 0 \\ 0 & 0 & 1 & 0 \end{array} \right) \qquad Q_1 = \left( \begin{array}{cccc} 1 & 0 & 0 & 1 \\ 1 & 0 & 1 & -1 \\ 1 & 1 & -1 & 0 \end{array} \right),$$ $$ P_2 = \left( \begin{array}{cccc} 0 & 1 & 0 & 1 \\ 1 & 0 & 1 & 1 \\ 1 & 1 & 0 & 1 \end{array} \right) \qquad Q_2 = \left( \begin{array}{cccc} 1 & -1 & 0 & 0 \\ 0 & 1 & 1 & 0 \\ 0 & 0 & 1 & -1 \end{array} \right).$$ The critical loci $ \mathcal{X} $ and $ \mathcal{Y} $ are quadrics, vanishing loci of $ \det(M_{\mathcal{X}}), \det(M_{\mathcal{Y}})$, respectively. Since $$ M_{\mathcal{X}} = \left( \begin{array}{cccccc} 1 & 0 & 0 & 0 & x_0+x_3 & 0 \\ 0 & 1 & 0 & 0 & x_0+x_2-x_3 & 0 \\ 0 & 0 & 1 & 0 & x_0+x_1-x_2 & 0 \\ 0 & 1 & 0 & 1 & 0 & x_0-x_1 \\ 1 & 0 & 1 & 1 & 0 & x_1+x_2 \\ 1 & 1 & 0 & 1 & 0 & x_2-x_3 \end{array}\right) $$ and $$  M_{\mathcal{Y}} = \left( \begin{array}{cccccc} 1 & 0 & 0 & 1 & y_0 & 0 \\ 1 & 0 & 1 & -1 & y_1 & 0 \\ 1 & 1 & -1 & 0 & y_2 & 0 \\ 1 & -1 & 0 & 0 & 0 & y_1+y_3 \\ 0 & 1 & 1 & 0 & 0 & y_0+y_2+y_3 \\ 0 & 0 & 1 & -1 & 0 & y_0+y_1+y_3 \end{array} \right),$$ we have $$ \mathcal{X}: 2 x_0x_1 - x_1^2 -2 x_0x_2 + x_1x_2 + 2 x_2^2 +2 x_0x_3 + x_1x_3 -3 x_2x_3 +2 x_3^2 = 0 $$ and $$ \mathcal{Y}: -4 y_0^2 -5 y_0y_1 - y_0y_2 +3 y_1y_2 - y_2^2 -6 y_0y_3 +3 y_1y_3 - y_2y_3 = 0.$$ The unified critical locus $ \mathcal{U} $ is defined by the vanishing of the determinants of $ M_{\mathcal{X}} $ and $ M_{\mathcal{Y}}$, and by the vanishing of the $2 \times 2$ minors of the matrices $$ \left( \begin{array}{cc} y_0 & x_0+x_3 \\ y_1 & x_0+x_2-x_3 \\ y_2 & x_0+x_1-x_2 \end{array} \right), \quad \left( \begin{array}{cc} y_1+y_3 & x_0-x_1 \\ y_0+y_2+y_3 & x_1+x_2 \\ y_0+y_1+y_3 & x_2-x_3 \end{array} \right),$$ and so its ideal is generated by the two quadrics above and by further $6$ bi--homogeneous forms, linear in both $ x_0, \dots, x_3$, and $ y_0, \dots, y_3$.

Now, we describe the geometry of the maps $ \pi_1: \mathcal{U} \to \mathcal{X} $ and $ \pi_2: \mathcal{U} \to \mathcal{Y}$, where $ \mathcal{X} $ and $ \mathcal{Y} $ are the critical quadrics computed before, and $ \mathcal{U} $ is their unified critical locus.

The centers of $ P_1 $ and $ P_2 $ are $ C_{P_1}(0:0:0:1) $ and $ C_{P_2}(0:-1:-1:1)$, respectively, while $ C_{Q_1}(-1:3:2:1) $ and $ C_{Q_2}(-1:-1:1:1)$ are the centers of $ Q_1$ and $ Q_2$, respectively. By Proposition \ref{prop-2-1}, $ C_{P_1}, C_{P_2} \in \mathcal{Y}$, and $ C_{Q_1}, C_{Q_2} \in \mathcal{X}$. Furthermore, it is easy to check that the critical quadrics $ \mathcal{X}, \mathcal{Y} $ are smooth.

Now, we list the main properties of $ \pi_1, \pi_2$.
\begin{enumerate}
\item $ \pi_1^{-1}(C_{Q_1}) = \{(C_{Q_1}, P) \in \mathcal{U} \ \vert \ P \in r_1: 4y_0+5y_1+5y_3 = y_2+y_3 = 0 \}$;
\item $ \pi_1^{-1}(C_{Q_2}) = \{(C_{Q_2}, P) \in \mathcal{U} \ \vert \ P \in r_2: y_0 = 3y_1 - y_2 = 0 \}$;
\item $ \pi_2^{-1}(C_{P_1}) = \{(Q, C_{P_1}) \in \mathcal{U} \ \vert \ Q \in s_1: x_1+x_3 = x_0-x_2+2x_3 = 0 \}$;
\item $ \pi_2^{-1}(C_{P_2}) = \{(Q, C_{P_2}) \in \mathcal{U} \ \vert \ Q \in s_2: x_1-2x_2+x_3 = x_0+x_3 = 0 \}$;
\item $ \pi_1^{-1}(Q) $ is a point in $ \mathcal{U} $ for every $ Q \in \mathcal{X} \setminus \{C_{Q_1}, C_{Q_2}\}$;
\item $ \pi_2^{-1}(P) $ is a point in $ \mathcal{U} $ for every $ P \in \mathcal{Y} \setminus \{C_{P_1}, C_{P_2}\}$.
\end{enumerate}
Moreover, $ C_{P_2} \in r_1, C_{P_1} \in r_2, C_{Q_2} \in s_1, C_{Q_1} \in s_2$.

As this Example shows, $ \pi_1 $ and $ \pi_2 $ are surjective, and $ \pi_2 \circ \pi_1^{-1} $ is a birational map, according also to \cite{bratelund}.
\end{example}

The second example illustrates a case of nested critical loci, and shows that it does not hold either the naive guess that $ \pi_1(\mathcal{U}) = \mathcal{X} $ or the naive guess that $ \mathcal{X} $ and $ \mathcal{Y} $ are birational.

\begin{example} \label{example-3-1} \rm
We consider a couple of three projections from $ \mathbb{P}^3$, the first two to $ \mathbb{P}^2$ described in Example \ref{example-P3P2P2}, and the third to $ \mathbb{P}^1$. We adopt the same notation in section \ref{sec::nestingcritical loci}, and so the quadrics computed above are now $ \mathcal{X}_2 $ and $ \mathcal{Y}_2$.

To complete the picture of Example \ref{example-P3P2P2}, we set $ r'_1, r'_2 $ the lines in $\mathcal{Y}_2$ through $ C_{P_2}, C_{P_1}$, different from $ r_1, r_2$, respectively, and $ s'_1, s'_2 $ the lines in $ \mathcal{X}_2$, through $ C_{Q_2}, C_{Q_1}$, different from $ s_1, s_2$, respectively.

Now, we consider also the third projection. The center $ L_{P_3}$ of the third projection $ P_3 $ is a line, and analogously $ L_{Q_3}$, center of $ Q_3$. The critical locus will have different properties according to the positions of $ L_{P_3} $ with respect to $ \mathcal{Y}_2$, and of $ L_{Q_3} $ with respect to $ \mathcal{X}_2$. Different situations occur not only when the centers are contained in the quadrics, but, if they are not contained, also when they meet one or two of the four lines $r_1, r'_1, r_2, r'_2$ or $ s_1, s'_1, s_2, s'_2$ in the quadrics through the two centers of the above projections.

We completely describe the geometry of the critical loci for the case
$$ P_3 = \left( \begin{array}{cccc} 0 & 3 & 0 & 1 \\ 0 & 0 & 1 & 1 \end{array} \right) \qquad \mbox{ and } \qquad Q_3 = \left( \begin{array}{cccc} 0 & 0 & 0 & 1 \\ 0 & 1 & 0 & 0 \end{array} \right).$$

Now, we compute the critical loci $ \mathcal{X}_3, \mathcal{Y}_3$, the unified one $ \tilde{\mathcal{U}}$, and study the maps $ \tilde{\pi}_1: \tilde{\mathcal{U}} \to \mathcal{X}_3$ and $ \tilde{\pi}_2:\tilde{\mathcal{U}} \to \mathcal{Y}_3$, for the couple of three projections $ P_1, P_2, P_3$, and $ Q_1, Q_2, Q_3$.

As previously stated, $ \mathcal{X}_3 $ and $ \mathcal{Y}_3 $ are determinantal (see Proposition \ref{prop-2-1}), and, in the case under consideration, have dimension $ 1 $ and degree $ 6$. Since the projection centers are contained in the critical locus, and the centers of $ P_3 $ and $ Q_3$ are lines, neither $ \mathcal{X}_3$, nor $ \mathcal{Y}_3 $ is irreducible. The centers of $ Q_3 $ and $ P_3 $ are $ L_{Q_3}: x_1 = x_3 = 0$ and $ L_{P_3}: 3 y_1 + y_3 = y_2 + y_3 = 0$, respectively. Also the residual curves $ \tilde{\mathcal{X}} $ and $ \tilde{\mathcal{Y}} $ are determinantal, respectively defined by the vanishing of the maximal minors of $$ N_{\tilde{\mathcal{X}}} = \left( \begin{array}{cc}
\begin{array}{l} 2 x_0 x_2 + x_1 x_2 -3 x_2^2 +\\ + x_2 x_3 +4 x_3^2 \end{array} & \begin{array}{l} x_1^2 +7 x_1 x_2 - x_2^2 -\\ -11 x_1 x_3 -5 x_2 x_3 \end{array} \\
-x_1 + x_2 - x_3 & x_2 \\ -2 x_2 + x_3 & -2 x_0 + x_1 -2 x_3 \end{array} \right) $$ and $$ N_{\tilde{\mathcal{Y}}} = \left( \begin{array}{cc}
\begin{array}{l} 24 y_1 y_2 -3 y_1 y_3 +\\ +3 y_2 y_3 -6 y_3^2\end{array} & \begin{array}{l} 18 y_1^2 +6 y_2^2 +15 y_1 y_3 +\\ +9 y_2 y_3 +6 y_3^2\end{array} \\  \\
6 y_0 -2 y_2 -2 y_3 & 2 y_0 +2 y_2 +2 y_3 \\ -6 y_1 +8 y_2 +11 y_3 & -8 y_0 -12 y_1 -2 y_2 -11 y_3 \end{array} \right).$$

\noindent $ \tilde{\mathcal{X}}$ and $ \tilde{\mathcal{Y}}$ are ACM, irreducible, degree $ 5$ curves, contained in the critical quadrics $ \mathcal{X}_2 $ and $ \mathcal{Y}_2$, respectively. $ \tilde{\mathcal{X}} $ meets $ L_{Q_3} $ at $ A(1:0:0:0) $ and $ B(1:0:1:0)$, while $ \tilde{\mathcal{Y}} $ meets $ L_{P_3} $ at $ C(0:1:3:-3) $ and $ D(5:2:6:-6)$. We remark that $ L_{Q_3} \cap \mathcal{X}_2 = \{ A, B\}$, and $ L_{P_3} \cap \mathcal{Y}_2 = \{ C, D\}$. To finish, we have $ A \notin s_1 \cup s'_1 \cup s_2 \cup s'_2$, $ B \in s_1$, $ C \in r'_1 \cap r_2$, and $ D \in r_1$.

The unified critical locus $ \tilde{\mathcal{U}} \subseteq \mathbb{P}^3 \times \mathbb{P}^3 $ has dimension $ 1$ and degree $5$, and its defining ideal is computed according to Proposition \ref{prop-id-unif-crit-locus}.

Now, we consider the maps $ \tilde{\pi}_1: \tilde{\mathcal{U}} \to \mathcal{X}_3$ and $ \tilde{\pi}_2: \tilde{\mathcal{U}} \to \mathcal{Y}_3$. It is easy to check that
\begin{itemize}
\item[$(a)$] the points of $ L_{Q_3} $ different from $ A, B$ do not belong to $ \tilde{\pi}_1(\tilde{\mathcal{U}})$;
\item[$(b)$] the points of $ L_{P_3} $ different from $ C, D$ do not belong to $ \tilde{\pi}_2(\tilde{\mathcal{U}})$;
\item[$(c)$] $ \vert \tilde{\pi}_1^{-1}(Q) \vert = 1 $ for every $ Q \in \tilde{\mathcal{X}}$;
\item[$(d)$] $ \vert \tilde{\pi}_2^{-1}(P) \vert = 1 $ for every $ P \in \tilde{\mathcal{Y}}$;
\item[$(e)$] $ \tilde{\pi}_1^{-1}(A) = \{(A,(1:1:1:-2))\} = \tilde{\pi}_2^{-1}(1:1:1:-2)$;
\item[$(f)$] $ \tilde{\pi}_1^{-1}(B) = \{(B,C_{P_1})\} = \tilde{\pi}_2^{-1}(C_{P_1})$;
\item[$(g)$] $ \tilde{\pi}_1^{-1}(C_{Q_1}) = \{ (C_{Q_1}, D)\} = \tilde{\pi}_2^{-1}(D)$;
\item[$(h)$] $ \tilde{\pi}_1^{-1}(C_{Q_2}) = \{ (C_{Q_2}, C)\} = \tilde{\pi}_2^{-1}(C)$;
\item[$(i)$] $ \tilde{\pi}_1^{-1}(2:0:-1:-2) = \{((2:0:-1;-2),C_{P_2})\} = \tilde{\pi}_2^{-1}(C_{P_2})$.
\end{itemize}

Hence, $ \tilde{\pi}_1(\tilde{\mathcal{U}}) = \tilde{\mathcal{X}}$ and $ \tilde{\pi}_2(\tilde{\mathcal{U}}) = \tilde{\mathcal{Y}}$. Finally, it is possible to verify that $ \tilde{\pi}_2 \circ \tilde{\pi}_1^{-1}: \tilde{\mathcal{X}} \to \tilde{\mathcal{Y}} $ is an isomorphism, whose inverse is $ \tilde{\pi}_1 \circ \tilde{\pi}_2^{-1}$. In particular, $ \tilde{\pi}_i $ is not dominant in this case, for $i=1,2$.
\end{example}

In next example, we show that not every irreducible component of $ \mathcal{X} $ is birational to at least an irreducible component of $ \mathcal{Y}$.

\begin{example} \label{example-3-2} \rm
It is well known that every Bordiga surface is the blow--up of $ \mathbb{P}^2 $ at $ 10 $ general points. The complete linear system that embeds the blow--up in $ \mathbb{P}^4 $ is given by the plane quartics through the $10$ points. In \cite{Gimigliano}, the author proves that the Bordiga surface is singular when $ 4 $ points among the $10$ ones lie on a line. In \cite{bnt1}, the authors proved that every Bordiga surface is the critical locus of a couple of $3$ projections $ \mathbb{P}^4 \to \mathbb{P}^2$. They also provide an explicit way for getting the projection matrices from the syzygy matrix of the defining ideal of the $ 10 $ points in $ \mathbb{P}^2$. When one applies this construction to $10$ points, $4$ of which on a line, one gets a Bordiga surface $ \mathcal{X} $ with a singular point $P$ only. This point $P$ has the property that the rank of $ M_{\mathcal{X}}(P)$ is $6 = (k+n+1)-2$. The unified critical locus $ \mathcal{U} $ and the critical locus $ \mathcal{Y}$ can be easily constructed as explained in Propositions \ref{prop-id-unif-crit-locus} and \ref{prop-2-1}. It turns out that $ \mathcal{Y} $ is the union of a plane $ L $ and a degree $ 5 $ surface $ \mathcal{Y}_1$. The surface $ \mathcal{Y}_1 $ is a Castelnuovo surface, obtained as the blow--up of $ \mathbb{P}^2 $ at $ 7 $ simple and $ 1 $ double point. The fibre over the singular point $ P $ is $ \pi_1^{-1}(P) = \{P\} \times L$. Then, $ L $ is not birational to an irreducible component of $ \mathcal{X}$. Finally, $ \mathcal{X} $ and $ \mathcal{Y}_1 $ are birational. Indeed, from a side, they both are birational to $ \mathbb{P}^2$, from the other, $ \pi_2 \circ \pi_1^{-1} $ gives an explicit birational map.
\end{example}




\begin{thebibliography}{10}

\bibitem{bbnt}
M.~Bertolini, GM.~Besana, R.~Notari, C.~Turrini,
\textit{Critical loci in computer vision and matrices dropping rank in codimension one.}
\newblock {Journal of Pure and Applied Algebra} (12) {\bf 224}, (2020), 1--27.

\bibitem{bnt1}
M.~Bertolini, R.~Notari, C.~Turrini,
\textit{The Bordiga surface as critical locus for 3-view reconstructions.}
MEGA 2017, Effective Methods in Algebraic Geometry, Nice (France), June 12-16, 2017.
\newblock {J. Symb. Comp.} \bf 91\rm, (2019), 74--97.

\bibitem{bnt2}
M.~Bertolini, R.~Notari, C.~Turrini,
\textit{Smooth determinantal varieties and critical loci in multiview geometry.}
\newblock {Collectanea Math.}, \bf 73\rm,(2022), 457--475, published on--line on July 21, 2021.

\bibitem{bnt3}
M.~Bertolini, R.~Notari, C.~Turrini,
\textit{Families of critical loci in multiview geometry .}
\newblock {to appear in Rend. Sem. Mat. Univ. Pol. Torino}, (2024).

\bibitem{tubbLAIA}
M.~Bertolini, G.~Besana, C.~Turrini,
\textit{Critical loci for projective reconstruction from multiple views in
higher dimension: a comprehensive theoretical approach.}
\newblock {Linear Algebra Appl.}, 469:335--363, 2015.

\bibitem{survey}
M.~Bertolini, C.~Turrini,
\textit{Problems and related results in Algebraic Vision and Multiview Geometry.}
\newblock{Rend.Circ.Mat.Palermo, II Ser.}, (2024).

\bibitem{Hart-Zi2}
R.~Hartley, A.~Zisserman,
\textit{Multiple view geometry in computer vision.}
\newblock Cambridge University Press, Cambridge, second edition, 2003.
\newblock With a foreword by Olivier Faugeras.

\bibitem{bratelund}
M.~Br\aa telund,
\textit{Critical configurations for two projective views, a new approach.}
\newblock {J. Symbolic Comput.}, \bf 120\rm,{2024}, Paper No. 102226, 22.

\bibitem{Gimigliano}
A.~Gimigliano,
\textit{On Veronesean surfaces.}
\newblock{Nederl. Akad. Wetensch. Indag. Math.}{\bf 51} (1989), n.1, 71--85.

\bibitem{Hart-Ka}
R.I.~Hartley, F.~Kahl,
\textit{Critical configurations for projective reconstruction from multiple
 views.}
\newblock {International Journal of Computer Vision}, 71(1):5--47, 2007.

\bibitem{Hart-Schaf}
R.I.~Hartley, F.~Schaffalitzky,
\textit{Reconstruction from projections using {G}rassmann tensors.}
\newblock In {Proceedings of the 8th European Conference on Computer
  Vision, Prague, Czech Republic}, LNCS. Springer, 2004.

\bibitem{WS}
L.~Wolf, A.~Shashua,
\textit{On projection matrices $\mathbb{P}^k \to \mathbb{P}^2, k = 3 \dots 6$, and their applications in
computer vision.}
\newblock {Int. J. of Computer Vision}, 48(1):53-67, 2002.

\end{thebibliography}
\end{document}